\newtheorem{thm}{Theorem}[section]
\newtheorem{lem}[thm]{Lemma}
\newtheorem{prop}[thm]{Proposition}
\newtheorem{cor}[thm]{Corollary}
\newtheorem{theoremintro}{Theorem}
\newtheorem{corollaryintro}{Corollary}
\newcommand{\bbold}{\mathbb}
\newcommand{\nn}{\mathbb{N}}
\newcommand{\zz}{\mathbb{Z}}
\def\R { {\bbold R} }
\def\Z { {\bbold Z} }
\def\T { {\bbold T} }
\def \d{\operatorname{d}}
\def \<{\langle}
\def \>{\rangle}
\def \k {{{\boldsymbol{k}}}}
\def \Th{\operatorname{Th}}
\def\Mo{\operatorname{Mo}}
\DeclareFontFamily{OMS}{smallo}{}
\DeclareFontShape{OMS}{smallo}{m}{n}{<->s*[.65]cmsy10}{}
\DeclareSymbolFont{smallo@m}{OMS}{smallo}{m}{n}
\DeclareMathSymbol{\smallo}{\mathord}{smallo@m}{79}
\DeclareFontFamily{U}{fsy}{}
\DeclareFontShape{U}{fsy}{m}{n}{<->s*[.9]psyr}{}
\DeclareSymbolFont{der@m}{U}{fsy}{m}{n}
\DeclareMathSymbol{\der}{\mathord}{der@m}{182}
\begin{document}

\title[\resizebox{4.7in}{!}{An Ax-Kochen-Ershov Theorem for Monotone Differential-Henselian Fields}]{An Ax-Kochen-Ershov Theorem for
Monotone Differential-Henselian Fields}

\author[Hakobyan]{Tigran Hakobyan}
\address{Department of Mathematics\\
University of Illinois at Urbana-Cham\-paign\\
Urbana, IL 61801\\
U.S.A.}
\email{hakobya2@illinois.edu}

\begin{abstract} Scanlon~\cite{S} proves Ax-Kochen-Ershov type results for differential-henselian monotone valued differential
fields with many constants. We show how to get rid of
the condition {\em with many constants}. 
\end{abstract}

\maketitle
\section*{Introduction}

\noindent
Let $\k$ be a differential field (always of characteristic $0$ in this paper,
with a single distinguished derivation). Let also an ordered abelian group $\Gamma$ be given. This gives rise to the Hahn field 
$K=\k((t^\Gamma))$, to be considered in the usual way as a valued
field. We extend the derivation 
$\der$ of $\k$ to a derivation on $K$ by
$$\der(\sum_{\gamma} a_{\gamma}t^\gamma)\ :=\ \sum_{\gamma} \der(a_{\gamma})t^\gamma.$$ 
Scanlon~\cite{S} extends the Ax-Kochen-Ershov theorem (see \cite{AK}, \cite{E}) to 
this differential setting. This includes requiring that $\k$ is linearly surjective in the sense that for each nonzero linear differential
operator $A=a_0+a_1\der + \dots +a_n\der^n$ over $\k$ we have
$A(\k)=\k$. Under this assumption, $K$ is differential-henselian
(see Section~\ref{sec:pre} for this notion), and the theory $\Th(K)$ of $K$ as a valued differential field (see also Section~\ref{sec:pre} for this) is completely axiomatized by: \begin{enumerate}
\item the axiom that there are many constants;
\item the theory $\Th(\k)$ of the differential residue field $\k$;
\item the theory $\Th(\Gamma)$ of the ordered abelian value group;
\item the axioms for differential-henselian valued fields.
\end{enumerate}
As to (1), having many constants means that every element of the differential field has the same valuation as some element of its constant field. This holds for $K$ as above (whether or not $\k$ is linearly surjective) because the constant field
of $K$ is $C_K=C_{\k}((t^{\Gamma}))$. This axiom plays an important role in some proofs of \cite{S}. Below we drop the ``many constants'' axiom
and generalize the theorem above to a much
larger class of differential-henselian valued fields. This 
involves a more general way of extending the derivation of $\k$ to $K$.

In more detail, let $c: \Gamma \to \k$ be an additive map. Then
the derivation $\der$ of $\k$ extends to a derivation $\der_c$ of $K$ by setting
$$ \der_c(\sum_{\gamma} a_{\gamma}t^{\gamma})\ :=\ \sum_\gamma \big(\der(a_\gamma) + c(\gamma) a_\gamma \big)t^\gamma.$$
Thus $\der_c$ is the unique derivation on $K$ that extends $\der$, respects infinite sums, and satisfies $\der_c(t^\gamma)=c(\gamma)t^\gamma$ for all $\gamma$. 
The earlier case has $c(\gamma)=0$ for all $\gamma$. Another case is where
$\k$ contains $\R$ as a subfield, $\Gamma=\R$, and $c: \R\to \k$ is the 
inclusion map; then $\der_c(t^r)=rt^r$ for $r\in \R$.

Let $K_c$ be the valued differential field $K$ with $\der_c$ as its distinguished derivation. 
Assume in addition that $\k$ is linearly surjective. Then $K_c$ is differential-henselian, and Scanlon's theorem above generalizes as follows:   

\begin{theoremintro}\label{F1} The theory $\Th(K_c)$ is completely determined by
$\Th(\k,\Gamma;c)$, where $(\k, \Gamma;c)$ is the $2$-sorted structure
consisting of the differential field $\k$, the ordered abelian group 
$\Gamma$, and the additive map $c: \Gamma \to \k$.
\end{theoremintro}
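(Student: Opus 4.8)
The plan is to establish, by a back-and-forth, the Ax-Kochen-Ershov principle that $(\k,\Gamma;c)\equiv(\boldsymbol{l},\Delta;d)$ implies $K_c\equiv L_d$; this is what ``completely determined by'' means here. So fix two instances of the construction, $K_c=\k((t^\Gamma))$ with derivation $\der_c$ and $L_d=\boldsymbol{l}((t^\Delta))$ with derivation $\der_d$, both having linearly surjective residue field (hence both differential-henselian), and assume $(\k,\Gamma;c)\equiv(\boldsymbol{l},\Delta;d)$. I regard each as a $3$-sorted structure $(K_c;\,\k,\Gamma;\,c)$ --- the valued differential field, its residue field and value group with the residue and valuation maps, and the additive map $c$ coming from the Hahn construction (note that $c$ is genuine extra data, not interpretable in $K_c$ alone). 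Passing to sufficiently saturated elementary extensions of the same cardinality and using $(\k,\Gamma;c)\equiv(\boldsymbol{l},\Delta;d)$, the residue-field/value-group/$c$ reducts of the two extensions become isomorphic; identifying along such an isomorphism, I may assume the extensions are $(\hat K;\,\hat{\boldsymbol k},\hat\Gamma;\,\hat c)$ and $(\hat L;\,\hat{\boldsymbol k},\hat\Gamma;\,\hat c)$ with common residue field $\hat{\boldsymbol k}$ (still linearly surjective), common value group $\hat\Gamma$, and common additive map $\hat c\colon\hat\Gamma\to\hat{\boldsymbol k}$. Both $\hat K$ and $\hat L$ remain monotone and differential-henselian (first-order conditions preserved under elementary extension), and in both the sentence $\forall\gamma\,\exists x\,(x\neq 0\wedge vx=\gamma\wedge\overline{\der x/x}=\hat c(\gamma))$ holds, being witnessed in $K_c$ and $L_d$ by $t^\gamma$; hence for every $\gamma\in\hat\Gamma$ the set $\{\overline{\der x/x}:x\neq 0,\ vx=\gamma\}$ is exactly the coset $\hat c(\gamma)+\ell(\hat{\boldsymbol k}^\times)$, where $\ell$ denotes the logarithmic derivative on $\hat{\boldsymbol k}$.

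Next I run a back-and-forth on the system $\mathcal S$ of triples $(A,B,\varphi)$, where $A\subseteq\hat K$, $B\subseteq\hat L$ are small valued differential subfields, each suitably closed (containing a lift of $\boldsymbol k_A\subseteq\hat{\boldsymbol k}$, and for each $\gamma\in\Gamma_A$ a monomial $\mathfrak{t}_\gamma$ with $v\mathfrak{t}_\gamma=\gamma$ and $\der\mathfrak{t}_\gamma/\mathfrak{t}_\gamma$ a fixed lift of $\hat c(\gamma)$), and $\varphi\colon A\to B$ is an isomorphism of valued differential fields that is the identity on $\Gamma_A$ and on $\boldsymbol k_A$ and matches the chosen lifts and monomials; the triple with $A=B=\qq$ lies in $\mathcal S$. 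The back-and-forth property is verified by decomposing an extension $A\subseteq A\langle a\rangle$ into steps of four kinds. \emph{Residue extension}: lift a prescribed transcendental of $\hat{\boldsymbol k}$ of valuation $0$; this is immediate at the level of valued fields, the derivative of the lift is determined modulo $\mathfrak m$ by $\der_{\hat{\boldsymbol k}}$, and the residual discrepancy in $\mathfrak m$ is absorbed by solving a linear differential equation in $\mathfrak m$ --- possible since $\hat K$ (resp.\ $\hat L$) is differential-henselian with linearly surjective residue field. \emph{Value-group extension by a $\gamma$ of infinite order over $\Gamma_A$}: adjoin a monomial of value $\gamma$ whose logarithmic derivative equals the chosen lift of $\hat c(\gamma)$; such a monomial exists in $\hat K$ and in $\hat L$ by the witnessing sentence above together with a further $\mathfrak m$-adjustment, and the two sides match because the attainable cosets $\hat c(\gamma)+\ell(\hat{\boldsymbol k}^\times)$ coincide. \emph{Immediate extension}: when neither residue field nor value group grows, the new generator is a pseudolimit of a pseudo-convergent sequence over the current subfield, of differential-algebraic or differential-transcendental type, realized in the saturated models by the differential-henselian analysis of such sequences --- here the hypothesis does its classical work (see Section~\ref{sec:pre} and \cite{S}). \emph{Algebraic extension}: handled by ordinary henselianity, the derivation extending uniquely (this covers residue-algebraic and ramified extensions). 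Exhausting the back-and-forth yields $\hat K\cong\hat L$, so $\hat K\equiv\hat L$ as valued differential fields, and therefore $K_c\equiv L_d$.

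The main obstacle is the value-group step, and it is exactly here that Scanlon's proof uses ``many constants'': with many constants one realizes the new value $\gamma$ by a constant, whose logarithmic derivative vanishes and so creates no differential obstruction, whereas for a general additive $c$ the value $\gamma$ must be realized by a monomial $\mathfrak{t}_\gamma$ with $\der\mathfrak{t}_\gamma/\mathfrak{t}_\gamma\equiv\hat c(\gamma)\pmod{\mathfrak m}$, a nonzero logarithmic derivative that has to be carried coherently through the back-and-forth. The things to get right are: that such a monomial exists in the saturated model, with value realizing the prescribed cut over $\Gamma_A$, from the witnessing sentence together with surjectivity of the residue map and a differential-henselian fine-tuning; that the two sides can then be matched, which works precisely because the cosets $\hat c(\gamma)+\ell(\hat{\boldsymbol k}^\times)$ agree, this coset datum being exactly the information preserved by carrying $\hat c$ along in the back-and-forth; and that the leftover discrepancy in the derivation is absorbed by another differential-henselian solvability argument, as in the residue step. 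Setting up the system so that $c$ is tracked throughout, and isolating this logarithmic-derivative bookkeeping as the substitute for the ``many constants'' axiom, is the heart of the argument.
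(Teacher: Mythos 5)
Your proposal is correct in substance and proves the right reading of the statement (the transfer $(\k,\Gamma;c)\equiv(\boldsymbol{l},\Delta;d)\Rightarrow K_c\equiv L_d$), and like the paper it is a back-and-forth between saturated models resting on the d-henselian machinery of Section~\ref{sec:pre}; but your decomposition is genuinely different from the paper's. The paper proves Theorem~\ref{MainTheorem} in a $2$-sorted language carrying \emph{both} a lift $\k$ of the entire differential residue field \emph{and} a cross-section $s$ with $s(\gamma)^\dagger=c(\gamma)$ as primitives (both canonical for $K_c$), and its ``good subfields'' contain all of $\k$ and are closed under $s$; the back-and-forth therefore has only two kinds of steps: adjoining $s(\gamma)$ for a new value $\gamma$ (Lemma~\ref{GoodSubfields} --- the derivation extends for free since $s(\gamma)'=c(\gamma)s(\gamma)$ with $c(\gamma)\in\k\subseteq E$), and an immediate step disposed of wholesale by (DV1), (DV3), (DV4) after an $\omega$-iteration that stabilizes the value group. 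You carry only $c$ and build partial lifts and monomials on the fly, so you need two further kinds of steps (residue extensions and field-algebraic extensions), an exhaustiveness argument for the four cases, and the coset computation $\{\overline{x^\dagger}:vx=\gamma\}=\hat c(\gamma)+(\hat{\boldsymbol{k}}^\times)^\dagger$ together with the $\smallo=(1+\smallo)^\dagger$ normalization of monomials; these last two facts are precisely the content of the paper's Proposition~\ref{facts} and Lemmas~\ref{ExistenceOfCrossSectionWithMapCOnly} and~\ref{ExistenceOfLiftWithMapCOnly}, which the paper deploys not inside the main back-and-forth but afterwards, to eliminate $s$ and the lift and, via Theorem~\ref{ConstructionOfMaps}, to construct them in saturated models for Theorem~\ref{F2}. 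Your route buys economy of auxiliary structure at the price of more bookkeeping; the paper's buys a two-case back-and-forth and a modular separation of ``construct $s$, $c$, lift'' from ``prove completeness''. Two places in your sketch need explicit care to count as a full proof: before the value-group step for $\gamma$ you must first perform residue steps so that $\hat c(\gamma)$ already lies in the matched partial residue field (otherwise ``the chosen lift of $\hat c(\gamma)$'' is not available in $A$); and the immediate step should be run, as in the paper, by appealing to the uniqueness and embeddability of spherically complete immediate monotone extensions ((DV3), (DV4)) rather than a bare pseudo-convergent-sequence case division, which does not by itself yield uniqueness in the differential setting.
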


\noindent
We actually prove in Section 2 a stronger version with the one-sorted structure $K_c$ 
expanded to a $2$-sorted one, with $\Gamma$ as the underlying set for the second sort, and as extra primitives the cross-section $\gamma \mapsto t^\gamma: \Gamma\to K$, the set $\k\subseteq K$, and the map
$c: \Gamma \to \k$. 

The question arises: which complete theories of valued differential fields
are covered by Theorem~\ref{F1}?
 The answer involves the notion of monotonicity:
a valued differential field $F$ with valuation $v$ is said to be 
{\em monotone\/} if $v(f')\ge v(f)$ for all $f\in F$; as usual, $f'$ denotes the derivative of $f\in F$ with respect to the distinguished derivation of $F$. 
The valued differential fields $K_c$ are all clearly monotone. 
We show:

\begin{theoremintro}\label{F2} Every monotone differential-henselian valued field is elementarily equivalent to some $K_c$ as in Theorem~\ref{F1}. 
\end{theoremintro}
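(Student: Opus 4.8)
The plan is to read off from $F$ a value group, a differential residue field, and an additive map between them, to assemble these into a Hahn field $K_c$ as in Theorem~\ref{F1}, and then to apply the two-sorted Ax--Kochen--Ershov transfer established in Section~2 to conclude $F\equiv K_c$. I begin with the setup. Since $F$ is monotone, its distinguished derivation maps $\mathcal{O}$ into $\mathcal{O}$ and $\mathfrak{m}$ into $\mathfrak{m}$, hence induces a derivation on the residue field $\k$, and $v(f^\dagger)\ge 0$ for every $f\in F^\times$, where $f^\dagger:=f'/f$. Since $F$ is differential-henselian, $\k$ is linearly surjective, and $F$ is of equicharacteristic $0$, so the Hahn field $\k((t^\Gamma))$ together with the derivations $\der_c$ of Theorem~\ref{F1} is available once a value group $\Gamma$ is fixed. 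Monotonicity, differential-henselianity and linear surjectivity of the residue field are elementary and hence preserved under elementary extensions; since it is enough to find an elementary extension of $F$ that is elementarily equivalent to some $K_c$, I may replace $F$ by any elementary extension of it whenever convenient.

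Passing to such an extension, I arrange that $F$ carries a cross-section $s\colon\Gamma\to F^\times$ of the valuation, $\Gamma:=v(F)$; this is standard for valued fields of equicharacteristic $0$ (one first produces an angular component map by a saturation/compactness argument and then splits the valuation, using that $1+\mathfrak{m}$ is divisible). I also fix a \emph{differential residue field lift}, that is, a differential subfield $E\subseteq\mathcal{O}$ with $\res|_E\colon E\to\k$ an isomorphism. Such a lift exists in any differential-henselian valued field of equicharacteristic $0$, built by transfinite recursion: a new generator $\bar a$ of $\k$ over the part $E$ built so far is lifted to a zero of the lifted minimal differential polynomial of $\bar a$ by differential-henselianity when $\bar a$ is differentially algebraic over $\res(E)$, and to an arbitrary preimage in $\mathcal{O}$ when $\bar a$ is differentially transcendental over $\res(E)$ (in the latter case any nonzero differential polynomial over $E$ applied to this preimage is a unit of $\mathcal{O}$, since $E\setminus\{0\}\subseteq\mathcal{O}^\times$, so its coefficientwise reduction is a nonzero differential polynomial, which does not vanish at $\bar a$). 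Now define $c\colon\Gamma\to\k$ by $c(\gamma):=\res\bigl(s(\gamma)^\dagger\bigr)$; this is well defined by monotonicity and additive because $s$ is a homomorphism, whence $s(\gamma+\delta)^\dagger=s(\gamma)^\dagger+s(\delta)^\dagger$. Via $\res|_E$ I also regard $c$ as a map into $E\subseteq F$.

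Form $K_c=\k((t^\Gamma))$ with the derivation $\der_c$. It is monotone, and differential-henselian since $\k$ is linearly surjective; its canonical cross-section $\gamma\mapsto t^\gamma$ and coefficient field $\k$ satisfy $(t^\gamma)^\dagger=\der_c(t^\gamma)/t^\gamma=c(\gamma)\in\k\subseteq\mathcal{O}_{K_c}$, so that $\res\bigl((t^\gamma)^\dagger\bigr)=c(\gamma)$, exactly the relation holding in $F$. Thus $F$ (equipped with $s$, $E$, $c$) and $K_c$ (equipped with $\gamma\mapsto t^\gamma$, $\k$, $c$) are both monotone, differential-henselian valued differential fields with linearly surjective residue field, carrying a cross-section, a differential residue lift, and the additive map $c$, and they determine literally the same two-sorted structure $(\k,\Gamma;c)$. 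The stronger, two-sorted version of Theorem~\ref{F1} proved in Section~2 is an Ax--Kochen--Ershov transfer principle for this class of expanded fields: their two-sorted theory is determined by $\Th(\k,\Gamma;c)$. Applying it to $F$ and to $K_c$ gives that they have the same two-sorted theory, hence, taking reducts, $F\equiv K_c$ as valued differential fields; since $F$ was an elementary extension of the originally given field, this proves the theorem.

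The step I expect to be the main obstacle is the last one: ensuring that Section~2 genuinely yields a transfer principle for the entire class of expanded fields described above, and not merely a description of $\Th(K_c)$, and checking that $F$ with the chosen cross-section and differential residue lift really does satisfy its hypotheses---in particular that the compatibility axiom relating the derivation, the cross-section and $c$, namely $\res\bigl(s(\gamma)^\dagger\bigr)=c(\gamma)$, is precisely the one satisfied on both sides. The auxiliary facts---existence of a cross-section after an elementary extension, and of a differential residue field lift---are routine but need care, since the additive map $c$, and therefore the field $K_c$, depends on the cross-section chosen.
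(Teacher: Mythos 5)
Your overall strategy---pass to a suitable elementary extension, produce a lift of the differential residue field and a cross-section, read off $c$, and invoke the two-sorted transfer of Section~2---is the paper's strategy, but there is a genuine gap at exactly the step you flag as the main obstacle, and it is not just a matter of checking hypotheses. The two-sorted theory $\Mo(\ell,s,c)$ that Theorem~\ref{MainTheorem} transfers requires the \emph{exact} identity $s(\gamma)^\dagger=c(\gamma)$ with $c(\gamma)\in\k\subseteq K$ (the lift), not the residue-level identity $\res\bigl(s(\gamma)^\dagger\bigr)=c(\gamma)$ that your construction provides. The exact identity is indispensable for the back-and-forth: in Lemma~\ref{GoodSubfields} one needs $s(\gamma)'=c(\gamma)s(\gamma)$ with $c(\gamma)\in\k\subseteq E$ in order for $E(s(\gamma))$ to be a \emph{differential} subfield and for good maps to extend; if $s(\gamma)^\dagger$ is only congruent to $c(\gamma)$ modulo $\smallo$, the stray infinitesimal need not lie in $E$ and the system of good maps breaks down. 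A cross-section obtained by the generic splitting of $1\to\mathcal{O}^\times\to K^\times\to\Gamma\to 0$ that you describe has no reason to satisfy $s(\Gamma)^\dagger\subseteq\k$; monotonicity only gives $s(\Gamma)^\dagger\subseteq\mathcal{O}$.

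The missing ingredient is the paper's Lemma~\ref{GroupValuation}: the set $G=\{a\in K^\times: a^\dagger\in\k\}$ is a subgroup of $K^\times$ with $v(G)=\Gamma$. This is where $\d$-henselianity enters a second time, via $\smallo=(1+\smallo)^\dagger$ and the fact that elements with $f'\prec f$ are asymptotic to constants; given it, one splits the restriction of $v$ to $G$ (Lemma~\ref{SaturatedCaseCrossSectionExistence}) to obtain a cross-section with $s(\Gamma)\subseteq G$, and only then defines $c(\gamma):=s(\gamma)^\dagger$. Note that correcting an arbitrary cross-section one $\gamma$ at a time using $\smallo=(1+\smallo)^\dagger$ destroys multiplicativity, so the group-theoretic splitting inside $G$ cannot be avoided. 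Your residue-level compatibility does match the three-sorted theory $\Mo(c)$ of Section~5, but Theorem~\ref{MT-l} is itself proved by reducing to the exact case through this same splitting-inside-$G$ argument, so that route does not bypass the missing lemma. The remainder of your argument---existence of the lift, additivity of $c$, the identification of $(\k,\Gamma;c)$ on both sides, and the final appeal to the transfer principle---agrees with the paper.
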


\noindent
This is proved in Section 3 and is analogous to the result from \cite{S} that any differential-henselian valued field with many constants is elementarily equivalent to some $K$ as in
Scanlon's theorem stated in the beginning of this Introduction. 
(In fact, that result follows from the ``complete axiomatization''
given in that theorem.)

Theorem~\ref{F2} has a nice algebraic consequence, generalizing \cite[Corollary 8.0.2]{ADAMTT}:

\begin{corollaryintro} \label{F3}
If a valued differential field $F$ is monotone and differential-henselian, then every valued differential field extension of $F$ that is algebraic over $F$ is also (monotone and) differential-henselian.
\end{corollaryintro}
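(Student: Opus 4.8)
I would prove this by showing that each of the three properties making up ``monotone and differential-henselian'' passes from $F$ to $E$, the only substantial point being monotonicity. Since $F$ is differential-henselian it is henselian, and the residue characteristic is $0$; hence $v$ extends uniquely to every algebraic extension of $F$, there is no defect, and $E$ is again henselian. The residue field $\k_E$ is algebraic over the linearly surjective field $\k_F$, hence itself linearly surjective, since linear surjectivity of differential fields is inherited by algebraic extensions. Assuming for the moment that $E$ is monotone, we then have that $E$ is monotone, henselian, and has linearly surjective differential residue field, so $E$ is differential-henselian by the characterization of monotone differential-henselian valued fields as precisely the monotone henselian ones whose residue field is linearly surjective (cf.\ the small-derivation statement used for \cite[Corollary~8.0.2]{ADAMTT}; see also \cite{S} and Section~\ref{sec:pre}). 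Thus everything reduces to monotonicity of $E$; and since $v(a')\ge v(a)$ is tested one element $a$ at a time, it suffices to treat the case where $E/F$ is finite.

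For a finite $E/F$, I would use tame ramification. As $F$ is henselian of residue characteristic $0$, there is a finite extension $\tilde E\supseteq E$ (pass to the Galois closure, then adjoin suitable roots of unity) fitting into a tower $F=F_0\subseteq F_1\subseteq\cdots\subseteq F_k=\tilde E$ of valued differential fields in which each step is either \emph{unramified} ($\Gamma_{F_{i+1}}=\Gamma_{F_i}$ and $F_{i+1}=F_i(\theta)$ with $\theta\in\mathcal O_{F_{i+1}}$ a lift of a primitive element of the separable residue extension $\k_{F_{i+1}}/\k_{F_i}$) or \emph{radical totally ramified} ($F_{i+1}=F_i(\beta)$ with $\beta^m=b\in F_i^\times$). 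Monotonicity passes to valued differential subfields, so it is enough to show each $F_{i+1}$ is monotone given that $F_i$ is; then $\tilde E$, and hence $E\subseteq\tilde E$, is monotone. In the unramified case $\mathcal O_{F_{i+1}}=\mathcal O_{F_i}[\theta]$ by Nakayama; differentiating $g(\theta)=0$, where $g$ is the minimal polynomial of $\theta$ over $F_i$, gives $\theta'=-g^{\der}(\theta)/g'(\theta)$ with $g^{\der}$ obtained by applying $\der$ to the coefficients of $g$. Monotonicity of $F_i$ gives $\der\mathcal O_{F_i}\subseteq\mathcal O_{F_i}$, so the numerator lies in $\mathcal O_{F_{i+1}}$, while $\overline{g'(\theta)}=\bar g'(\bar\theta)\ne 0$ by separability of $\bar g$, so the denominator is a unit; hence $\der\mathcal O_{F_{i+1}}\subseteq\mathcal O_{F_{i+1}}$, and since $\Gamma_{F_{i+1}}=\Gamma_{F_i}$, writing $a=fu$ with $f\in F_i^\times$ and $u\in\mathcal O_{F_{i+1}}^\times$ and using that $x\mapsto x'/x$ is a homomorphism $F_{i+1}^\times\to(F_{i+1},+)$ gives $v(a'/a)\ge 0$ for all $a\in F_{i+1}^\times$. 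In the radical case, differentiating $\beta^m=b$ gives $\beta'=(b'/b)\beta/m$, so $v(b'/b)\ge 0$ (monotonicity of $F_i$) yields $v(\beta')\ge v(\beta)$; expanding an element in the basis $1,\beta,\dots,\beta^{m-1}$ shows $\der\mathcal O_{F_{i+1}}\subseteq\mathcal O_{F_{i+1}}$, and since $\Gamma_{F_{i+1}}/\Gamma_{F_i}$ has exponent dividing $m$, the relation $a^m\in F_i^\times\cdot\mathcal O_{F_{i+1}}^\times$ together with the logarithmic-derivative homomorphism again gives $v(a'/a)\ge 0$.

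The main obstacle is exactly this last part: controlling logarithmic derivatives across a ramified algebraic extension. Residue characteristic $0$ is used essentially — it makes $g'(\theta)$ a unit in the unramified case and lets radical steps suffice in the totally ramified case — and the monotonicity hypothesis on $F$ is precisely what forces $b'/b$, the coefficients of $g^{\der}$, and hence all of $\der\mathcal O_{F_i}$, into the valuation ring; without it the $\tfrac{1}{2\sqrt{t}}$-type behaviour would destroy monotonicity in a ramified extension. The other ingredients — inheritance of henselianity and of linear surjectivity under algebraic extensions, and the characterization of monotone differential-henselianity — are standard, so they should be invoked rather than reproved.
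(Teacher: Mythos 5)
Your argument has a fatal gap at the very first step, before the tame-ramification analysis even begins: you assert that a monotone henselian valued field with linearly surjective differential residue field is automatically $\d$-henselian, and you use this to reduce the whole corollary to monotonicity of $E$. No such characterization is stated in the paper or in \cite{ADAMTT}, and it is false. The standard implications go only one way: $\d$-henselian $\Rightarrow$ henselian (take ordinary polynomials $P\in\mathcal O[Y]$) and $\d$-henselian $\Rightarrow$ linearly surjective residue field. In the converse direction, $\d$-henselianity demands a zero in $\mathcal O$ for differential polynomials such as $Y'-b$ with $b\asymp 1$, i.e.\ it demands (twisted) antiderivatives, and ordinary henselianity gives no purchase on these: the henselization of $\k(t)$ inside $\k((t^{\Z}))_c$ (with $\k$ linearly surjective) is monotone, henselian, and has linearly surjective differential residue field, but a solution of $y'=b$ is generically transcendental over $\k(t)$, so this field is not $\d$-henselian; with few constants this also follows from its failure to be $\d$-algebraically maximal via \cite[Theorem 7.0.3]{ADAMTT}. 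The part you dismiss in one sentence is precisely the hard content of Corollary~\ref{F3}, while the part you labor over --- monotonicity of algebraic extensions --- is the easy part, which the paper simply quotes from \cite[Corollary 6.3.10]{ADAMTT} (your ramification-tower argument for it is essentially sound, but redundant).

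For comparison, the paper's route to $\d$-henselianity of $E$ is model-theoretic and leans on the main theorems rather than on local algebra. Following \cite[Proof of Corollary 8.0.2]{ADAMTT}, for each $n$ there is a set $\Sigma_n$ of sentences in the language of valued differential fields, uniform in $K$, such that $K\models\Sigma_n$ iff every valued differential field extension of $K$ of degree $n$ is $\d$-henselian. By Theorem~\ref{F2} the given field is elementarily equivalent to a Hahn field $\k((t^\Gamma))_c$; every finite extension of that Hahn field is spherically complete with linearly surjective differential residue field, hence $\d$-henselian by \cite[Corollary 5.4.3 and Theorem 7.2.6]{ADAMTT}; so $\Sigma_n$ holds there and therefore in $F$, for all $n$, and the infinite algebraic case follows since any witnessing differential polynomial has its coefficients in a finite subextension. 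Any repair of your proof would have to replace the false characterization by an ingredient of comparable strength --- either this transfer argument or a direct proof that finite extensions of $\d$-henselian monotone fields are $\d$-henselian, which is exactly what is being proved.
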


\noindent
See Section 4. To state further results it is convenient to introduce some
notation. Let $F$ be a differential field. For nonzero
$f\in F$ we set $f^\dagger := f' / f$ and $F^\dagger~ :=~ \{f^\dagger~ :~f~ \in~F^{\times}\}$, where $F^{\times} := F \setminus\{0\}.$

So far our only assumption on $c: \Gamma \to \k$ is that it is additive, but
the case $c(\Gamma)\cap \k^\dagger = \{0\}$ is of particular interest: it is not
hard to show that then the constant field of $K_c$ is $C_\k ((t^\Delta))$,
where the value group $\Delta$ of the constant field equals $\ker(c)$
 and is a pure subgroup of
$\Gamma$. Conversely (see Section 3):

\begin{theoremintro} \label{F4}
Every monotone differential-henselian valued field $F$
such that $v(C_F^\times)$ is pure in $v(F^\times)$ is elementarily equivalent
to some $K_c$ as in Theorem \ref{F1} with $c(\Gamma)\cap~\k^\dagger = \{0\}$.
\end{theoremintro}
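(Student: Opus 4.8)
The plan is to reduce Theorem~\ref{F4} to Theorem~\ref{F2} by a careful bookkeeping of the value group of the constant field. By Theorem~\ref{F2} we already know that a given monotone differential-henselian valued field $F$ is elementarily equivalent to some $K_c$ as in Theorem~\ref{F1}; the only thing missing is the extra property $c(\Gamma)\cap\k^\dagger=\{0\}$. So first I would pin down, for an arbitrary additive $c:\Gamma\to\k$, exactly what the value group of the constant field $C_{K_c}$ is in terms of $c$. Working out the condition $\der_c(\sum a_\gamma t^\gamma)=0$ term by term, one sees that a nonzero element $\sum_{\gamma} a_\gamma t^\gamma$ of $K_c$ is a constant precisely when, for each $\gamma$ in its support, $\der(a_\gamma)=-c(\gamma)a_\gamma$, i.e. $a_\gamma\ne0$ and $-c(\gamma)=a_\gamma^\dagger\in\k^\dagger$; hence $v(C_{K_c}^\times)$ consists of those $\gamma$ with $c(\gamma)\in-\k^\dagger=\k^\dagger$ (note $\k^\dagger$ is a group, being closed under negation since $(1/f)^\dagger=-f^\dagger$). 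Write $H:=\{\gamma\in\Gamma:\ c(\gamma)\in\k^\dagger\}=c^{-1}(\k^\dagger)$, a subgroup of $\Gamma$; then $v(C_{K_c}^\times)=H$.

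Next I would bring in the purity hypothesis. Since $F\equiv K_c$ as valued differential fields (indeed in the richer two-sorted language of Section~2), the statement ``$v(C_F^\times)$ is pure in $v(F^\times)$'' transfers, so $H$ is pure in $\Gamma$: for every $n\ge1$, $n\Gamma\cap H=nH$. Now I want to replace the given $c$ by a new additive map $\tilde c:\Gamma\to\k$ with $\tilde c(\Gamma)\cap\k^\dagger=\{0\}$ but with $(\k,\Gamma;\tilde c)\equiv(\k,\Gamma;c)$, so that by Theorem~\ref{F1} the resulting $K_{\tilde c}$ satisfies $K_{\tilde c}\equiv K_c\equiv F$. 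The idea is to modify $c$ only on $H$, killing its image there. Because $c(H)\subseteq\k^\dagger$, and since $\k$ is linearly surjective — in particular $\k^\dagger=\k$ once $\k$ is linearly surjective for the operator $a_0+\der$, as surjectivity of $f\mapsto f'+af$ gives every $a$ as some $f^\dagger$ — actually one only needs that $\k^\dagger$ is a divisible subgroup of the additive group of $\k$ (which again follows from linear surjectivity: $f\mapsto nf^\dagger$ ranges over all of $\k^\dagger$, but more to the point $\k^\dagger=\k$ so divisibility is automatic). Using that $H$ is pure in $\Gamma$ and $\k^\dagger$ is a divisible hence pure (indeed direct summand) subgroup of $(\k,+)$, I would choose a complement: pick a subgroup $\Gamma_0$ of $\Gamma$ with $\Gamma=H\oplus\Gamma_0$ as abstract abelian groups — possible when $H$ is pure and, say, $\Gamma/H$ is nice; in general purity is exactly what lets one split off $H$ up to the needed extent — and define $\tilde c$ to agree with $c$ on $\Gamma_0$ and to be $0$ on $H$. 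Then $\tilde c(\Gamma)=c(\Gamma_0)$, and I must check $c(\Gamma_0)\cap\k^\dagger=\{0\}$: if $c(\gamma_0)\in\k^\dagger$ with $\gamma_0\in\Gamma_0$, then $\gamma_0\in c^{-1}(\k^\dagger)=H$, so $\gamma_0\in H\cap\Gamma_0=\{0\}$, done.

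The remaining point is that passing from $c$ to $\tilde c$ does not change the elementary type of the two-sorted structure. Here I would argue that $(\k,\Gamma;c)$ and $(\k,\Gamma;\tilde c)$ are even isomorphic, or at least elementarily equivalent, via an automorphism that is the identity on $\k$ and on $\Gamma$ as \emph{sets} but reinterprets $c$; more carefully, the map $c\mapsto\tilde c$ is witnessed by the fact that both record the same data modulo the ``invisible'' subgroup $\k^\dagger$ of constant-producing values, and the theory $\Th(\k,\Gamma;c)$ determines $\Th(K_c)$ by Theorem~\ref{F1}. Concretely: since on $\Gamma_0$ nothing changed and on $H$ the value of $c$ was always forced into $\k^\dagger$, which in a linearly surjective $\k$ is all of $\k$ and hence ``model-theoretically transparent'' for the purpose of the valued differential field $K_c$, one shows $K_{\tilde c}\equiv K_c$ directly by exhibiting a valued differential field isomorphism (adjust each $t^\gamma$ for $\gamma\in H$ by multiplying by a suitable unit of $\k$ whose logarithmic derivative is $c(\gamma)$ — such a unit exists by linear surjectivity — which converts $\der_c$ into $\der_{\tilde c}$), at least after passing to $\aleph_1$-saturated elementary extensions where such infinite products of corrections can be carried out coherently. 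Then $F\equiv K_c\equiv K_{\tilde c}$ and $\tilde c(\Gamma)\cap\k^\dagger=\{0\}$, which is the assertion.

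I expect the main obstacle to be the last step: making precise the sense in which changing $c$ on the subgroup $H=c^{-1}(\k^\dagger)$ is harmless. The naive ``rescale $t^\gamma$'' isomorphism requires choosing, for each $\gamma\in H$, an element $u_\gamma\in\k^\times$ with $u_\gamma^\dagger=c(\gamma)$ and doing so \emph{multiplicatively} in $\gamma$ (a group homomorphism $H\to\k^\times$ lifting $c|_H$ through the surjection $\dagger:\k^\times\to\k$), which is a genuine obstruction living in an $\mathrm{Ext}$ group; one gets around it either by working in a saturated model and invoking divisibility/injectivity, or by arguing purely on the level of theories, using the axiomatization in Theorem~\ref{F1} to see that only $\Th(\k,\Gamma;c)$ matters and that this theory is unchanged — the latter because the relation ``$c(\gamma)\in\k^\dagger$'' and the induced map $\Gamma/H\to\k/\k^\dagger$ are all that is first-order visible, and those are literally the same for $c$ and $\tilde c$. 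I would take the theory-level route to avoid the splitting issue, and only use purity of $H$ in $\Gamma$ to guarantee that $\tilde c$ with $\tilde c(H)=0$ can be chosen additive on all of $\Gamma$ extending $c|_{\Gamma_0}$.
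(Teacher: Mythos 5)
Your strategy (get some $K_c\equiv F$ from Theorem~\ref{F2}, compute $v(C_{K_c}^\times)=H:=c^{-1}(\k^\dagger)$, transfer purity of $H$ by elementary equivalence, then replace $c$ by a $\tilde c$ vanishing on $H$) is genuinely different from the paper's, and the first two steps are correct. But the write-up contains one outright false assertion and leaves the decisive step unproved. The false assertion: linear surjectivity of $\k$ does \emph{not} give $\k^\dagger=\k$. Surjectivity of $y\mapsto y'+ay$ produces, for each $b$, some $y$ with $y'+ay=b$; it does not produce a \emph{nonzero} $y$ with $y'=ay$, which is what $a\in\k^\dagger$ requires. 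The paper's own running example refutes it: $\T_{\log}$ is linearly surjective yet $\T_{\log}^\dagger\cap\R=\{0\}$. (The units $u_\gamma$ with $u_\gamma^\dagger=c(\gamma)$ that your rescaling map actually needs are only required for $\gamma\in H$, where they exist by the very definition of $H$, not by linear surjectivity; so this error is repairable, but the ``$\k^\dagger$ is all of $\k$, hence transparent'' justification collapses.) Second, purity of $H$ in $\Gamma$ does not by itself yield a complement $\Gamma=H\oplus\Gamma_0$; one needs $H$ to be algebraically compact, e.g.\ $\aleph_1$-saturated as an abelian group, which is exactly why the paper first passes to an $\aleph_1$-saturated elementary extension and invokes \cite[Corollary 3.3.37]{ADAMTT}.

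The decisive gap is $K_{\tilde c}\equiv K_c$. The ``theory-level route'' you say you would take does not work as stated: Theorem~\ref{F1} reduces $\Th(K_c)$ to $\Th(\k,\Gamma;c)$, but $(\k,\Gamma;c)$ and $(\k,\Gamma;\tilde c)$ are in general \emph{not} elementarily equivalent (you changed $c$ on $H$, and $c$ is a primitive of that structure), and the claim that only the induced map $\Gamma/H\to\k/\k^\dagger$ is first-order visible in the one-sorted $K_c$ is precisely what has to be proved, not something Theorem~\ref{F1} supplies. The isomorphism route can be made to work, but it needs a group homomorphism $u:H\to\k^\times$ with $u(\gamma)^\dagger=c(\gamma)$ on $H$, i.e.\ a splitting of the pullback along $c|_H$ of the exact sequence $1\to C_\k^\times\to\k^\times\to\k^\dagger\to 0$; this pullback is pure-exact (purity of the kernel uses torsion-freeness of $\Gamma$) and splits once $C_\k^\times$ is $\aleph_1$-saturated --- so again only after arranging saturation, which your proposal merely gestures at. For comparison, the paper sidesteps the entire repair step: in an $\aleph_1$-saturated elementary extension it splits $\Gamma=\Delta\oplus\Gamma^*$ with $\Delta=v(C^\times)$, builds the cross-section $s$ so that $s|_\Delta$ is a cross-section of the valued field $C$ (hence lands in $C^\times$) while $s|_{\Gamma^*}$ comes from Theorem~\ref{ConstructionOfMaps}; then $\ker(c)=v(C^\times)$ holds by construction and Proposition~\ref{facts} immediately gives $c(\Gamma)\cap\k^\dagger=\{0\}$. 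Your route is completable, but each of its three ingredients (the splitting of $\Gamma$, the homomorphic lift on $H$, and the resulting equivalence $K_{\tilde c}\equiv K_c$) requires the same saturation machinery, and none of them is actually established in the proposal.
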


\noindent
The referee showed us an example of a monotone henselian valued differential field $F$ for which $v(C_F^\times)$ is not pure in $v(F^\times)$.
In Section 4 we give an example of a monotone differential-henselian 
field $F$ such that $v(C_F^\times)$ is not pure in $v(F^\times)$.

The hypothesis
of Theorem~\ref{F4} 
that $v(C_F^\times)$ is pure in $v(F^\times)$ holds if the residue field is
algebraically closed or real closed (see Section 4). It includes also
the case of main interest to us, where $F$ has few constants, that is, the valuation is trivial on $C_F$.
In that case any $c$ as in Theorem~\ref{F4} is injective by 
Corollary~\ref{fewConstants}. 

Section 3 contains examples of
additive maps $c: \Gamma\to \k$ for which $K_c$ has few constants, including a case where $\Th(K_c)$ is decidable. Two of those examples show that in Theorem~\ref{F1}, even when we have few constants, the traditional Ax-Kochen-Ershov principle without the map $c$ does not hold. (It does hold in Scanlon's theorem where
$c=0$, but in general we do not expect to have a $c$ that is definable in the 
valued differential field structure.)

\section{Preliminaries}\label{sec:pre}

\medskip\noindent
%
Adopting terminology from \cite{ADAMTT},
a {\em valued differential field\/} is
a differential field $K$ together with a
(Krull) valuation $v: K^{\times} \to \Gamma$ whose
residue field $\mathbf{k}$ 
$:=\mathcal{O}/\smallo$ has characteristic 
zero; here $\Gamma=v(K^\times)$ is 
the value group, and we also let $\mathcal{O}=\mathcal{O}_K$ denote the valuation ring 
of $v$ with maximal ideal $\smallo$, and let
$$C\ =\ C_K\ :=\ \{f\in K:\ f'=0\}$$ 
denote the constant field of the differential field $K$. 
We use notation from~\cite{ADAMTT}: for elements $a,b$ of a 
valued field with valuation $v$ we set 
$$a\asymp b:\Leftrightarrow va=vb, \quad a\preceq b\Leftrightarrow b\succeq a:\Leftrightarrow va\ge vb, \quad a\prec b \Leftrightarrow b\succ a:\Leftrightarrow va > vb.$$

Let $K$ be a valued differential field as above, and let $\der$ 
be its derivation. 
We say that
$K$ has {\em many constants\/} if $v(C^\times)=\Gamma$. 
We say that the derivation of $K$ is {\em small\/} if $\der(\smallo)\subseteq \smallo$.
If $K$, with a small derivation, has many constants, then $K$ is {\em monotone\/} in the sense of 
\cite{C}, that is, $v(f) \le v(f')$ for all $f\in K$. We say that
$K$ has {\em few constants\/} if $v(C^\times)=\{0\}$.
Note: if $K$ is monotone, then its derivation
is small; if the derivation of $K$ is small, then $\der$ is continuous
with respect to the valuation topology on $K$. Note also that if 
$K$ is monotone, then so is any valued differential field extension 
with small derivation and the same value group as $K$.

{\em From now on we assume that the derivation
of $K$ is small}. This has the effect (see \cite{C} or
\cite[Lemma 4.4.2]{ADAMTT}) that also 
$\der(\mathcal{O})\subseteq \mathcal{O}$, and so $\der$ induces a derivation
on the residue field; we view $\mathbf{k}$ below as
equipped with this induced derivation, and refer to it as the 
{\em differential residue field of $K$}.

\medskip\noindent
We say that $K$ is {\em differential-henselian\/} (for short: 
{\em $\operatorname{d}$-henselian}) if every differential polynomial 
$P\in \mathcal{O}\{Y\}=\mathcal{O}[Y, Y', Y'',\dots]$ whose reduction
$\overline{P}\in \mathbf{k}\{Y\}$ has total degree $1$ has a zero in 
$\mathcal{O}$. (Note that for ordinary 
polynomials $P\in \mathcal{O}[Y]$ this requirement
defines the usual notion of a henselian valued field, that is, 
a valued field whose valuation ring is henselian as a local ring.) 

If $K$ is $\operatorname{d}$-henselian, then its differential residue field
is clearly {\em linearly surjective}: any linear differential equation
$y^{(n)}+ a_{n-1}y^{(n-1)} + \cdots + a_0y = b$ with coefficients $a_i,b\in  \mathbf{k}$ has a solution in $\mathbf{k}$. This is a key
constraint on our notion of $\operatorname{d}$-henselianity. 
If $K$ is $\operatorname{d}$-henselian, then $\mathbf{k}$ has a {\em lift to $K$}, 
meaning, a differential subfield of $K$
contained in $\mathcal{O}$ that maps isomorphically 
onto $\mathbf{k}$ under the canonical map from $\mathcal{O}$ onto $\mathbf{k}$; see
\cite[7.1.3]{ADAMTT}. Other items from \cite{ADAMTT} that are relevant in this paper
are the following differential analogues of 
Hensel's Lemma and of results due to
Ostrowski/Krull/Kaplansky on valued fields:
\begin{enumerate}
\item[(DV1)] If the derivation of $\mathbf{k}$ is nontrivial, then
$K$ has a spherically complete immediate valued differential field extension 
with small derivation; \cite[6.9.5]{ADAMTT}. 
\item[(DV2)] If $\mathbf{k}$ is linearly surjective and 
$K$ is spherically complete, then $K$ is $\d$-henselian;
\cite[7.0.2]{ADAMTT}.

\item[(DV3)] If $\mathbf{k}$ is linearly surjective and $K$ is monotone, 
then any two spherically complete immediate monotone
valued differential field extensions of $K$ are isomorphic over $K$; 
\cite[7.4.3]{ADAMTT}.
\end{enumerate}

\noindent
We also need a model-theoretic variant of (DV3): 

\begin{enumerate}
\item[(DV4)] Suppose $\mathbf{k}$ is linearly surjective and $K$ is monotone with $v(K^\times) \not= \{ 0 \}$.
Let $K^{\bullet}$ be a spherically complete immediate
valued differential field extension of $K$. Then  $K^{\bullet}$ can be embedded
over $K$ into any $|v(K^\times)|^+$-saturated $\d$-henselian monotone valued differential
field extension of $K$; \cite[7.4.5]{ADAMTT}.
\end{enumerate}

\section{Elementary equivalence of monotone differential-henselian fields}
\noindent
In this section we obtain Theorem $\ref{F1}$ from the introduction as a 
consequence of a more precise result in a 2-sorted setting.
We consider 2-sorted structures
$$\mathcal{K}\ =\ (K, \Gamma; v, s, c),$$
where $K$ is a differential field equipped with a differential
subfield $\k$ (singled out by a unary 
predicate symbol), $\Gamma$ is an ordered abelian group,
$v: K^\times \to \Gamma=v(K^\times)$ is a valuation that makes $K$ into
a monotone valued differential field such that $\k \subseteq K$ is a lift
of the differential residue field, $s: \Gamma \to K^\times$ is a cross-section
of $v$ (that is, $s$ is a group morphism and $v\circ s=\text{id}_{\Gamma}$),
and $c: \Gamma \to \k$ satisfies 
$c(\gamma)=s(\gamma)^\dagger$ for all $\gamma \in \Gamma$ (so $c$ is additive). We construe these $\mathcal{K}$ as $L_2$-structures
for a natural $2$-sorted language $L_2$ (with unary function symbols for
$v$, $s$, and $c$). We have an obvious
set $\Mo(\ell, s,c)$ of $L_2$-sentences whose models are exactly these $\mathcal{K}$; the ``$\ell$'' is to indicate the presence of a lift.

For example, for $K=\k((t^\Gamma))$ as in the introduction 
and additive $c: \Gamma\to \k$ we consider $K_c$ as a model of $\Mo(\ell, s,c)$
in the obvious way by taking $\k\subseteq K$ as lift, and $\gamma\mapsto t^\gamma$ as cross-section. 

\begin{thm}\label{MainTheorem} 
If $\mathcal{K}$ is $\d$-henselian, then
$\Th(\mathcal{K})$ is axiomatized by:
\begin{enumerate}[font=\normalfont]
\item $\Mo(\ell, s,c)$;
\item the axioms for $\d$-henselianity;
\item Th$(\k,\Gamma;c)$ with $\k$ as differential field and $\Gamma$ as ordered abelian group.
\end{enumerate}
\end{thm}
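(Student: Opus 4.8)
The plan is to prove completeness of the axiom system by a back-and-forth / embedding argument of Ax--Kochen--Ershov type. Let $\mathcal{K}_1$ and $\mathcal{K}_2$ be two $\d$-henselian models of the listed axioms; we must show $\mathcal{K}_1\equiv\mathcal{K}_2$. Passing to elementary extensions, we may assume both are $\kappa$-saturated for a suitably large $\kappa$; it then suffices to build an isomorphism, or rather to run a back-and-forth between small substructures. The hypothesis in item (3) is precisely what synchronizes the two ``ends'' of the AKE sandwich: since $\Th(\k_1,\Gamma_1;c_1)=\Th(\k_2,\Gamma_2;c_2)$ in the $2$-sorted language with the additive map, we may fix a partial elementary correspondence between $(\k_1,\Gamma_1;c_1)$ and $(\k_2,\Gamma_2;c_2)$ and work relative to it.

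The key steps, in order, are as follows. First, reduce to the case where the value group is nontrivial: if $v(K_1^\times)=\{0\}$ then $K_i=\k_i$ as differential fields (the cross-section and lift are trivial) and the statement is immediate from item (3). So assume $\Gamma_i\neq\{0\}$. Second, set up a small substructure of $\mathcal{K}_1$ generated over the lift $\k_1$ by a cross-section; using the cross-section $s$ and the compatibility $c(\gamma)=s(\gamma)^\dagger$, one checks that the differential subfield of $K_1$ generated by $\k_1$ and $s(\Gamma_1')$ for a subgroup $\Gamma_1'\le\Gamma_1$ is, as a valued differential field, isomorphic to a ``monomial'' field built from $\k_1$, $\Gamma_1'$ and $c\!\restriction\!\Gamma_1'$ in a canonical way — this is where one records that $s(\gamma)'=c(\gamma)s(\gamma)$ forces the derivation on monomials, so the valued-differential-field isomorphism type of this substructure is determined by the $L_2$-data $(\k_1,\Gamma_1';c_1)$. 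Match it against the corresponding substructure of $\mathcal{K}_2$ via the partial elementary map from the residue/value data. Third, close off: take the spherical completion. By (DV1) (applicable once we arrange the differential residue field to have nontrivial derivation, or handle the trivial-derivation case directly via ordinary henselianity) each side has a spherically complete immediate monotone extension; by (DV3) that extension is unique over the base, and by (DV4) it embeds over the base into the $\kappa$-saturated $\mathcal{K}_2$. Running this symmetrically and alternating with extensions of the residue field and value group (absorbed through item (3) and standard AKE back-and-forth for ordered abelian groups and for differential fields, using linear surjectivity) produces the desired isomorphism of saturated models, hence $\mathcal{K}_1\equiv\mathcal{K}_2$.

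The main obstacle, and the heart of the argument, is the ``monomial substructure'' step: one must show that the valued differential field generated over the lift $\k$ by a cross-section image is rigid — its isomorphism type over $\k$ depends only on the abstract additive datum $c\!\restriction\!\Gamma'$ — and that a partial elementary map on $(\k,\Gamma;c)$ lifts to an isomorphism of these substructures respecting $v$, $s$, and $\der$. Verifying that $\der$ on such a substructure is the unique small derivation extending that of $\k$ with $s(\gamma)^\dagger=c(\gamma)$, and that no hidden constants or valuation collapse occur, is the place where monotonicity (smallness) and the compatibility condition $c(\gamma)=s(\gamma)^\dagger$ are used essentially. Once that is in hand, the residue field and value group are handled by the classical AKE back-and-forth (for the value group as an ordered abelian group, and for the differential residue field using linear surjectivity so that residue-field extensions remain linearly surjective in the relevant sense), and the spherical-completion step is pure input from (DV1)--(DV4); the glue that keeps the two coordinates coherent throughout is exactly the $2$-sorted theory $\Th(\k,\Gamma;c)$ in item (3).
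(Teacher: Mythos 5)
Your proposal is correct and follows essentially the same route as the paper: pass to saturated models, fix an isomorphism of the $(\k,\Gamma;c)$-data, and run a back-and-forth over subfields containing the lift and closed under the cross-section, where the two extension steps are exactly yours — adjoining cross-section monomials (whose valued-differential isomorphism type over $\k$ is pinned down by $s(\gamma)'=c(\gamma)s(\gamma)$ and the coset analysis of $\gamma$ modulo $\Gamma_E$), and then absorbing arbitrary elements as immediate extensions via spherical completeness and (DV1), (DV3), (DV4). The only cosmetic differences are that the paper uses CH plus an absoluteness argument in place of your ``suitably large $\kappa$,'' and that fixing the full lift $\k$ inside every good subfield makes separate residue-field extensions during the back-and-forth unnecessary.
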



\noindent
We first develop the required technical material, and give the proof 
of this theorem at the end of this section. Until further notice, $\mathcal{K} = (K, \Gamma; \k, v, s, c)\models \Mo(\ell, s,c)$. 
For any subfield $E$ of $K$ we set $\Gamma_E:= v(E^\times)$.

\medskip\noindent
We define a 
{\em good subfield of $\mathcal{K}$} to be a differential subfield of $K$ such that (i) $\k \subseteq E$, (ii) $s(\Gamma_E) \subseteq E$, and (iii) $\vert{ \Gamma_E}\vert \leq \aleph_0$. Thus $\k$ is a good subfield of $\mathcal{K}$.


\begin{lem}\label{CountabilityOfValuation}
Let $E$ be a good subfield of $\mathcal{K}$ and $x\in K\setminus E$. Then 
$|\Gamma_{E(x)}|\le \aleph_0$.
\end{lem}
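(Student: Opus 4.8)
The plan is to observe that this is a purely valuation‑theoretic statement: conditions (i), (ii) in the definition of a good subfield are irrelevant here, and the only hypothesis used is $|\Gamma_E|\le\aleph_0$. Since $\Gamma_{E(x)}$ is partitioned into cosets of $\Gamma_E$, each of cardinality $|\Gamma_E|$, it suffices to prove $|\Gamma_{E(x)}/\Gamma_E|\le\aleph_0$, as then $|\Gamma_{E(x)}|\le|\Gamma_{E(x)}/\Gamma_E|\cdot|\Gamma_E|\le\aleph_0$. I would then split into the cases where $x$ is algebraic over $E$ and where $x$ is transcendental over $E$.

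Suppose first that $x$ is algebraic over $E$, say $[E(x):E]=n<\infty$. I claim $[\Gamma_{E(x)}:\Gamma_E]\le n$, which suffices. Indeed, if $\gamma_1,\dots,\gamma_m\in\Gamma_{E(x)}$ lie in pairwise distinct cosets modulo $\Gamma_E$ and $y_i\in E(x)^\times$ satisfy $v(y_i)=\gamma_i$, then $y_1,\dots,y_m$ are $E$‑linearly independent: in any nonzero $E$‑linear combination the nonzero terms $a_iy_i$ have pairwise distinct valuations $v(a_i)+\gamma_i$ (else $\gamma_i-\gamma_j\in\Gamma_E$), so the combination has valuation $\min_i(v(a_i)+\gamma_i)<\infty$ and is therefore nonzero. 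Hence $m\le n$.

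Now suppose $x$ is transcendental over $E$. Here I would invoke the classical fact (proved by the same minimal‑valuation argument, now applied to the monomials of a hypothetical algebraic relation) that elements of $K$ whose valuations are rationally independent over $\Gamma_E$ are algebraically independent over $E$. Since $E(x)$ has transcendence degree $1$ over $E$, this forces the rational rank of $\Gamma_{E(x)}/\Gamma_E$, i.e. $\dim_\qq\big((\Gamma_{E(x)}/\Gamma_E)\otimes_\zz\qq\big)$, to be at most $1$. Consequently $\Gamma_{E(x)}/\Gamma_E$ modulo its torsion subgroup embeds into $\qq$ and so is countable; and its torsion subgroup is the union over $n\ge 1$ of the groups $\big(\tfrac{1}{n}\Gamma_E\cap\Gamma_{E(x)}\big)/\Gamma_E$, each of which injects into $\Gamma_E/n\Gamma_E$ and hence has cardinality $\le|\Gamma_E|\le\aleph_0$. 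Therefore $\Gamma_{E(x)}/\Gamma_E$ is countable, and we are done.

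The transcendental case is the only part that requires care: one could alternatively quote the Abhyankar inequality from the valuation‑theory literature or from \cite{ADAMTT}, but the argument above is self‑contained. The small amount of bookkeeping with the torsion subgroup is needed precisely because $\Gamma_E$, though countable, may be neither finitely generated nor divisible. (The same reasoning bounds by $\aleph_0$ the rational rank of the value group of the differential field generated by $E$ and $x$, which has transcendence degree $\le\aleph_0$ over $E$, so the conclusion would persist for that field as well.)
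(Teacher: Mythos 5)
Your proof is correct, and it is genuinely self-contained where the paper simply defers: the paper's ``proof'' of Lemma~\ref{CountabilityOfValuation} is a citation to \cite[Lemma 3.1.10]{ADAMTT} as a well-known fact. Your route is the classical one via the fundamental inequality: in the algebraic case you bound $[\Gamma_{E(x)}:\Gamma_E]$ by $[E(x):E]$ using the observation that elements with pairwise distinct valuations modulo $\Gamma_E$ are $E$-linearly independent, and in the transcendental case you bound the rational rank of $\Gamma_{E(x)}/\Gamma_E$ by the transcendence degree and then handle the torsion subgroup by injecting its $n$-torsion part into $\Gamma_E/n\Gamma_E$ (correctly using that ordered abelian groups are torsion-free). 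All of these steps check out, including the slightly delicate torsion bookkeeping, which is indeed necessary since $\Gamma_E$ need not be finitely generated or divisible. An alternative, which avoids the algebraic/transcendental case split and the quotient-group bookkeeping entirely, is a single induction on polynomial degree: writing $P_n$ for the set of values $p(x)$ with $\deg p\le n$, one shows $v(P_{n+1})\subseteq v(P_n)\cup\bigl(\Gamma_E+v(p(x))\bigr)$ for a single $p$ of degree $n+1$ (if any) whose value escapes $v(P_n)$ --- so each degree contributes at most one new coset of $\Gamma_E$ --- and then $v(E[x])=\bigcup_n v(P_n)$ is countable and one passes to $E(x)$ by taking differences. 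Your argument buys a sharper structural statement (the index and rational-rank bounds), while the inductive one is shorter and more uniform; either is acceptable here.
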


\noindent
This is well-known; see for example \cite[Lemma 3.1.10]{ADAMTT}.

\begin{lem}\label{GoodSubfields}
Let $E\subseteq K$ be a good subfield of $\mathcal{K}$ and $\gamma\in \Gamma\setminus \Gamma_E$, that is, $s(\gamma)\notin E$. Then $E(s(\gamma))$ is also a good subfield of $\mathcal{K}$.
\end{lem}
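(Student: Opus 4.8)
The goal is to show that adjoining a single cross-section value $s(\gamma)$ to a good subfield $E$ keeps all three defining properties of a good subfield: containing $\k$, being closed under $s$ on its own value group, and having countable value group. The first property is automatic since $\k\subseteq E\subseteq E(s(\gamma))$. The third property is immediate from Lemma~\ref{CountabilityOfValuation} with $x=s(\gamma)$, since $|\Gamma_E|\le\aleph_0$ forces $|\Gamma_{E(s(\gamma))}|\le\aleph_0$. So the entire content is the second property: I must show $s(\Gamma_{E(s(\gamma))})\subseteq E(s(\gamma))$.

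\emph{Identifying the new value group.} The key algebraic point is that $\Gamma_{E(s(\gamma))}=\Gamma_E+\zz\gamma$; that is, adjoining $s(\gamma)$ does not enlarge the value group beyond the subgroup generated by $\Gamma_E$ and $\gamma$. To see this, note that $s(\gamma)$ is transcendental over $E$ (since $s(\gamma)\notin E$ and, more to the point, its powers $s(\gamma)^n=s(n\gamma)$ have values $n\gamma$, and if $n\gamma\notin\Gamma_E$ for all $n\neq 0$ then $s(\gamma)$ is transcendental over $E$ as a ``generic'' element; if some $n\gamma\in\Gamma_E$ one argues on the minimal such $n$). The standard computation for the value group of a simple transcendental extension by an element whose powers are multiplicatively independent modulo $E^\times$ in value gives exactly $\Gamma_{E(s(\gamma))}=\Gamma_E\oplus\zz\gamma$ when $\gamma$ has infinite order mod $\Gamma_E$, and $\Gamma_E+\zz\gamma$ in general: any nonzero $p=\sum a_i s(\gamma)^i\in E[s(\gamma)]$ has $v(p)=\min_i\big(v(a_i)+i\gamma\big)$ provided the minimum is attained uniquely, which one arranges by collecting terms whose values lie in the same coset of $\Gamma_E$ modulo the subgroup generated by $\gamma$; the worst case, where $\gamma$ has finite order $n$ mod $\Gamma_E$, is handled by writing $s(\gamma)^n = s(n\gamma) = u\cdot e$ with $e\in E^\times$, $v(e)=n\gamma$, $u\in E(s(\gamma))$ a unit — actually $s(n\gamma)\in E$ here — and reducing to a polynomial of degree $<n$.

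\emph{Closure under $s$.} Given $\delta\in\Gamma_{E(s(\gamma))}=\Gamma_E+\zz\gamma$, write $\delta=\varepsilon+k\gamma$ with $\varepsilon\in\Gamma_E$ and $k\in\zz$. Since $s$ is a group homomorphism, $s(\delta)=s(\varepsilon)\cdot s(\gamma)^k$. Now $s(\varepsilon)\in E$ because $E$ is a good subfield (property (ii) applied to $\varepsilon\in\Gamma_E$), and $s(\gamma)^k\in E(s(\gamma))$ trivially. Hence $s(\delta)\in E(s(\gamma))$, which is exactly property (ii) for $E(s(\gamma))$.

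\emph{Main obstacle.} The only real work is the value-group computation $\Gamma_{E(s(\gamma))}=\Gamma_E+\zz\gamma$, and within that the slightly delicate case is when $\gamma$ has finite order modulo $\Gamma_E$; there one must be careful that passing to $E(s(\gamma))$ genuinely does not create values outside $\Gamma_E+\zz\gamma$, for instance by checking that every element of $E[s(\gamma)]$ can be rewritten, using $s(\gamma)^n\in E$ for the minimal $n$ with $n\gamma\in\Gamma_E$, as an $E$-linear combination of $1,s(\gamma),\dots,s(\gamma)^{n-1}$ whose terms have pairwise distinct values, so that $v$ of the sum is the minimum of the $v$'s of the terms, all of which lie in $\Gamma_E+\zz\gamma$. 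This is entirely standard valuation theory of simple transcendental (or in the finite-order case, algebraic) extensions, so I would state it cleanly and invoke it rather than belabor it.
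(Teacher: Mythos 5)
Your argument follows the paper's proof in all essentials: the same case split on whether $\gamma$ has finite or infinite order modulo $\Gamma_E$, the same computation $v\bigl(\sum_i q_i s(\gamma)^i\bigr)=\min_i\bigl(v(q_i)+i\gamma\bigr)$ justified by the powers of $s(\gamma)$ lying in distinct cosets of $\Gamma_E$, the same conclusion $\Gamma_{E(s(\gamma))}=\Gamma_E+\zz\gamma$, and the same use of $s(\delta)=s(\varepsilon)\,s(\gamma)^k$ for closure under $s$. Conditions (i) and (iii) are dispatched exactly as in the paper.

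There is one omission: a good subfield is by definition a \emph{differential} subfield of $K$, and you never check that $E(s(\gamma))$ is closed under the derivation. This is not automatic for a field extension inside a differential field; it holds here because $s(\gamma)'=c(\gamma)\,s(\gamma)$ (from the axiom $c(\gamma)=s(\gamma)^\dagger$) and $c(\gamma)\in\k\subseteq E$, so $\der$ maps $E[s(\gamma)]$ into itself and hence $E(s(\gamma))$ into itself. The paper opens its proof with exactly this observation. It is a one-line fix, but as written your proof establishes only that $E(s(\gamma))$ is a good \emph{valued} subfield, not a good subfield in the sense required by the lemma.
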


\begin{proof} From $c(\gamma)\in \k\subseteq E$ and $s(\gamma)' = c(\gamma)s(\gamma)$ we get that
$E(s(\gamma))$ is a differential subfield of $K$ and that
condition (i) for being a good subfield is satisfied by $E(s(\gamma))$.
For condition (ii) we distinguish two cases:

\medskip\noindent
(1) $n \gamma \in \Gamma_E$ for some $n\in \nn^{\ge 1}$. Take
$n\geq 1$ minimal with $n \gamma \in \Gamma_E$. Then $0,\gamma, 2 \gamma,\ldots, (n-1) \gamma$ are in different cosets of $\Gamma_E$, so for every $q(X)\in E[X]^{\neq}$ of degree  $<n$ we get $q(s(\gamma))\neq 0$. Hence the minimum polynomial of $s(\gamma)$ over $E$ is $X^n - s(n \gamma)$. Thus, given any $x\in E(s(\gamma))^\times$, we have 
$$x\ =\  q_0 + q_1 s(\gamma) + \ldots + q_{n-1} s(\gamma)^{n-1}$$ with $q_0,\dots, q_{n-1}\in E$, not all $0$,
so $\displaystyle v(x) = \min_{i=0,\ldots,n-1} \{ v(q_i) + i \gamma\}$. Therefore, $\Gamma_{E(s(\gamma))} = \Gamma_E + \zz \gamma$ and hence $s(\Gamma_{E(s(\gamma))})\subseteq s(\Gamma_E) \cdot s(\gamma)^{\zz}\subseteq E(s(\gamma))$.

\medskip\noindent
(2) $n \gamma\notin \Gamma_E$ for all $n\in \nn^{\ge 1}$.
Then $0,\gamma, 2\gamma, \ldots $ are in different cosets of $\Gamma_E$, so $s(\gamma)$ is transcendental over $E$ and for any polynomial $q(X)= q_0+q_1 X + \ldots + q_n X^n\in E[X]$, we have $\displaystyle v(q(s(\gamma))) = \min_{i=0,\ldots, n} \{ v(q_i) + i \gamma \}$. As in case (1) this yields 
$\Gamma_{E(s(\gamma))} = \Gamma_E + \zz \gamma$ and so
 $s(\Gamma_{E(s(\gamma))})\subseteq s(\Gamma_E) \cdot s(\gamma)^{\zz}\subseteq E(s(\gamma))$.

Thus condition (ii) of good subfields holds for $E(s(\gamma))$.
Condition (iii) is satisfied by Lemma \ref{CountabilityOfValuation}.
\end{proof}

\noindent
\textit{In the rest of this section we fix a $\d$-henselian $\mathcal{K}$.}
Let $T_{\mathcal{K}}$ be the $L_2$-theory given by (1)--(3) in Theorem~\ref{MainTheorem}. Assume CH (the Continuum Hypothesis), and let 
$$\mathcal{K}_1\ =\ (K_1, \Gamma_1; v_1, s_1, c_1), \qquad \mathcal{K}_2\ =\ (K_2, \Gamma_2; v_2, s_2, c_2)$$ be saturated models of $T_{\mathcal{K}}$ of cardinality $\aleph_1$; remarks following Corollary~\ref{cor:syntequiv} explain why we can assume CH. Then the structures $(\k_1, \Gamma_1; c_1)$ and $(\k_2, \Gamma_2; c_2)$ are also saturated of cardinality $\aleph_1$, where $\k_1$ and $\k_2$ are the lifts of the differential residue fields of $K_1$ and $K_2$ respectively. 
Since $(\k_1, \Gamma_1; c_1)$ and $(\k_2, \Gamma_2; c_2)$ are elementarily equivalent to $(\k, \Gamma; c)$, we have an isomorphism $f = (f_r, f_v)$ from 
$(\k_1, \Gamma_1; c_1)$ onto $(\k_2, \Gamma_2; c_2)$ with $f_r : \k_1 \to \k_2$ and $f_v : \Gamma_1 \to \Gamma_2$.

A map $g:E_1\to E_2$ between good subfields $E_1$ and $E_2$ of $\mathcal{K}_1$ and $\mathcal{K}_2$ respectively, will be called {\em good\/} if
\begin{enumerate}[resume*]
\item $g:E_1\to E_2$ is a differential field isomorphism,
\item $g$ extends $f_r$,
\item $f_v \circ v_1 = v_2\circ g$,
\item $g\circ s_1=s_2\circ f_v$.

\end{enumerate}

\noindent
Note that then $g$ is also an isomorphism of the valued subfield $E_1$ of $K_1$ onto the valued subfield $E_2$ of $K_2$. The map $f_r:\k_1\to \k_2$  is clearly a good map.

\begin{prop} \label{BackAndForth} $\mathcal{K}_1\cong \mathcal{K}_2$.
\end{prop}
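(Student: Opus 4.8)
The plan is to prove $\mathcal{K}_1 \cong \mathcal{K}_2$ by a back-and-forth argument building up a good map $g\colon E_1 \to E_2$ between good subfields, starting from $f_r\colon \k_1 \to \k_2$, and then showing that by a transfinite chain of extensions of length $\aleph_1$ we can arrange that the union is an isomorphism of all of $\mathcal{K}_1$ onto all of $\mathcal{K}_2$. Since $|K_1| = |K_2| = \aleph_1$, it suffices to show: given a good map $g\colon E_1 \to E_2$ and an element $x \in K_1 \setminus E_1$ (respectively $y \in K_2\setminus E_2$, for the ``back'' direction), we can extend $g$ to a good map defined on a good subfield containing $x$ (respectively whose image contains $y$). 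Enumerate $K_1$ and $K_2$ in type $\omega_1$ and interleave the two directions in the usual way; the union of the chain is then a good map whose domain is $K_1$ and whose image is $K_2$, and such a map is exactly an $L_2$-isomorphism $\mathcal{K}_1 \cong \mathcal{K}_2$ (it is a differential field isomorphism commuting with $v$, $s$, $c$ and inducing $f_r$ on the residue lifts).

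The extension step itself breaks into three sub-steps, performed in this order. First, adjoin to $E_1$ the cross-section elements needed: if $v_1(x) \notin \Gamma_{E_1}$, replace $E_1$ by $E_1(s_1(\gamma))$ for $\gamma = v_1(x)$, using Lemma~\ref{GoodSubfields} to see this is again a good subfield, and extend $g$ by sending $s_1(\gamma) \mapsto s_2(f_v(\gamma))$ — one checks this is well-defined and good using the explicit description of valuations on $E_1(s_1(\gamma))$ from the proof of Lemma~\ref{GoodSubfields} together with $c_1(\gamma) = s_1(\gamma)^\dagger$ and the fact that $f$ commutes with $c$. Iterating, we may assume $v_1(x) \in \Gamma_{E_1}$, and more generally, since $|\Gamma_{E_1(x)}| \le \aleph_0$ by Lemma~\ref{CountabilityOfValuation}, we may after countably many such steps assume $\Gamma_{E_1(x)} = \Gamma_{E_1}$, i.e. $x$ contributes no new values. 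Second, handle the residue field: if the residue $\bar{x}$ (after multiplying $x$ by a suitable $s_1(\gamma) \in E_1$ to make it a unit) is not in $\k_1$, we are in the situation of extending the differential residue field; we want to first pass to a good subfield whose residue lift is correspondingly larger. This is where saturation of $(\k_1,\Gamma_1;c_1) \cong (\k_2,\Gamma_2;c_2)$ and the isomorphism $f$ enter: any countable extension of the differential residue field inside $\k_1$ can be matched by $f_r$ inside $\k_2$, and pulled back to a lift inside a good subfield. Third, once $x$ adds neither new value nor new residue, $x$ is (a limit of) a pseudo-Cauchy / immediate-type situation over $E_1$; here we invoke the $\d$-henselian valued-field machinery — specifically (DV1)--(DV4) and the fact that $\mathcal{K}_1,\mathcal{K}_2$ are $\d$-henselian and saturated — to embed a spherically complete immediate extension of $E_1$ containing $x$ into $K_2$ over $E_2$.

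Concretely for the third sub-step: pass to a spherically complete immediate monotone extension $E_1^\bullet$ of the valued differential field $E_1$ (if the induced derivation on the residue field is trivial one must argue slightly differently, but monotonicity and the small-derivation hypothesis keep us in the monotone category); by (DV3) it is unique over $E_1$, and by (DV4) — using that $K_2$ is $\aleph_1$-saturated hence $|\Gamma_{E_1}|^+$-saturated since $|\Gamma_{E_1}| \le \aleph_0$ — we can embed $E_1^\bullet$ over $E_2$ (transporting via the already-constructed good $g$) into $K_2$. This embedding carries $x \in E_1^\bullet$ to some element of $K_2$, and one checks the resulting map remains good (it adds no new values, so conditions (4)--(7) persist, and it is a differential field embedding by construction). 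The ``back'' direction is symmetric.

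The main obstacle I expect is the bookkeeping in the second sub-step — propagating an extension of the differential residue field up to an extension of good subfields while keeping all of conditions (4)--(7) intact, and in particular making sure that enlarging the residue lift does not force new values into $\Gamma_{E_1}$ in an uncontrolled way, and that the lift can always be chosen inside $\mathcal{O}_{K_1}$ compatibly with $s_1$. A secondary subtlety is the degenerate case where the residue field carries the trivial derivation, since then (DV1) does not directly apply and one needs a separate (easier, because then everything is governed by ordinary henselianity plus linear surjectivity) argument to realize the immediate extension; and one must also treat at the very start the trivial-value-group case $\Gamma = \{0\}$, excluded from (DV4), where $\mathcal{K}$ reduces to a pure differential field and the statement is immediate. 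Finally, the reduction to saturated models of cardinality $\aleph_1$ under CH, and the passage from ``$\mathcal{K}_1 \cong \mathcal{K}_2$ for saturated models'' to ``$T_{\mathcal{K}}$ axiomatizes $\Th(\mathcal{K})$'', is standard and is what the paper defers to the remarks after Corollary~\ref{cor:syntequiv}.
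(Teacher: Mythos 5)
Your proposal follows essentially the same route as the paper: a back-and-forth system of good maps, first saturating the value group by adjoining cross-section elements via Lemma~\ref{GoodSubfields}, then realizing the resulting immediate extension inside the other side using (DV1), (DV3) and (DV4). Two of the difficulties you flag are in fact non-issues. Your second sub-step (enlarging the residue lift) is vacuous: by condition (i) every good subfield already contains $\k_1$, which is a lift of the \emph{entire} differential residue field of $K_1$, so $E_1$ and $K_1$ have the same residue field and no new residues can ever appear; once the value group of $E_1\langle x\rangle$ has been absorbed, the extension is automatically immediate, which is exactly how the paper proceeds. Likewise, the degenerate case of a trivial derivation on the residue field cannot occur: $\d$-henselianity forces $\k$ to be linearly surjective, and a trivially differential field is never linearly surjective (the operator $\der$ is nonzero but has image $\{0\}$), so (DV1) always applies. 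With those two worries removed, your argument is the paper's argument.
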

\begin{proof}  We claim that the collection of good maps is a back-and-forth system between  $K_1$ and  $K_2$. (By the saturation assumption this yields the desired result.)
This claim holds trivially if $\Gamma_1 = \{ 0 \}$, so assume $\Gamma_1 \not= \{ 0 \}$, and thus $\Gamma_2 \not= \{ 0\}$.

\medskip\noindent
Let $g:E_1 \to E_2$ be a good map and $\gamma\in \Gamma_1\setminus \Gamma_{E_1}$. By Lemma~\ref{GoodSubfields} we have good subfields $E_1\big(s_1(\gamma)\big)$ of $\mathcal{K}_1$ and
$E_2\big(s_2(f_v(\gamma))\big)$ of $\mathcal{K}_2$.
The proof of that lemma then yields easily a good map
$$g_\gamma: E_1\big(s_1(\gamma)\big)\to E_2\big(s_2(f_v(\gamma))\big)$$
that extends $g$ with $g_\gamma\big(s_1(\gamma)\big)=s_2\big(f_v(\gamma)\big)$.

\medskip\noindent
Let $g:E_1\to E_2$ be a good map and $x\in K_1 \setminus E_1$. We show how to extend $g$ to a good map with $x$ in its domain.

By condition (i) of being a good subfield, $E_1\supseteq \k_1$ and $E_2\supseteq \k_2$. The group $\Gamma_{E_1\< x \>}$ 
is countable by Lemma \ref{CountabilityOfValuation}.
Thus by applying iteratively the construction above to elements 
$\gamma\in \Gamma_{E_1\<x\>}$, we can extend $g$ to a good map $g^1:E_1^1\to E_2^1$ with 
$\Gamma_{E_1^1}=\Gamma_{E_1 \< x \>}$. Likewise we can extend $g^1$ to a good map
$g^2:E_1^2 \to E_2^2$ with $\Gamma_{E_1^2}=\Gamma_{E_1^1 \< x \>}$.
Iterating this process and taking the union $\displaystyle E_i^{\infty} = \bigcup_n E_i^n$, for $i=1,2$, we get a good map
$g^\infty: E_1^\infty \to E_2^\infty$ extending $g$ such that $\Gamma_{E_1^{\infty}}=\Gamma_{E_1^{\infty} \< x \>}$, so the valued differential field extension $E_1^{\infty} \< x \>$ of $E_1^{\infty}$ is immediate. 
By (DV1) and (DV4) we have a spherically complete immediate valued differential field extension $E_1^{\bullet}\subseteq K_1$ of $E_1^{\infty} \< x \>$. Note that then $E_1^{\bullet}$ is also a spherically complete
immediate valued differential field extension of $E_1^{\infty}$. 
Likewise we have a spherically complete immediate valued differential field extension $E_2^{\bullet}\subseteq K_2$ of 
$E_2^{\infty}$. By (DV3) we can extend
$g^\infty$ to a valued differential field isomorphism 
$g^{\bullet}:E_1^{\bullet} \to E_2^{\bullet}$. It is clear that
then $g^{\bullet}$ is a good map extending $g$ with $x$ in its domain. 

This finishes the proof of the \textit{forth} part. The \textit{back} part is done likewise.
\end{proof}


\begin{proof}[Proof of Theorem \ref{MainTheorem}]
We can assume the Continuum Hypothesis (CH) for this argument.
(This is explained further in the remarks following Corollary~\ref{cor:syntequiv}.) Our job is to show that the theory $T_{\mathcal{K}}$ is complete. In other words, given any two models of
$T_{\mathcal{K}}$ we need to show they are elementarily equivalent. Using CH we can assume that these models are saturated of cardinality $\aleph_1$, and so they are indeed isomorphic by Proposition~\ref{BackAndForth}. \end{proof}

\noindent
Note that Theorem \ref{F1} is a consequence of 
Theorem $\ref{MainTheorem}$.

\begin{cor}\label{cor:equiv}
Suppose $\mathcal{K}_1 = (K_1, \Gamma_1; v_1, s_1, c_1)$ and $\mathcal{K}_2 = (K_2, \Gamma_2; v_2, s_2, c_2)$ are $\d$-henselian models of $\Mo(\ell,c,s)$. 
Then: $\mathcal{K}_1 \equiv \mathcal{K}_2\Longleftrightarrow (\k_1, \Gamma_1; c_1) \equiv (\k_2, \Gamma_2; c_2)$.
\end{cor}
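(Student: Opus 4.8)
The plan is to derive Corollary~\ref{cor:equiv} directly from Theorem~\ref{MainTheorem}, which states that for a $\d$-henselian $\mathcal K$ the theory $\Th(\mathcal K)$ is axiomatized by (1) $\Mo(\ell,s,c)$, (2) the $\d$-henselianity axioms, and (3) $\Th(\k,\Gamma;c)$. The point is that among these three groups of axioms, only (3) depends on the choice of $\mathcal K$; items (1) and (2) are a fixed set of $L_2$-sentences satisfied by \emph{all} $\d$-henselian models of $\Mo(\ell,s,c)$.

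For the forward direction ($\Rightarrow$): if $\mathcal K_1\equiv\mathcal K_2$, then since the two-sorted structure $(\k_i,\Gamma_i;c_i)$ is interpretable (indeed, a reduct after naming the predicate $\k$ and composing with $c$ and $v$) in $\mathcal K_i$ in a way that does not depend on $i$, any elementary statement about $(\k_i,\Gamma_i;c_i)$ translates into an elementary statement about $\mathcal K_i$; hence $(\k_1,\Gamma_1;c_1)\equiv(\k_2,\Gamma_2;c_2)$. Concretely, for each $L$-sentence $\varphi$ in the language of $(\k,\Gamma;c)$ there is an $L_2$-sentence $\varphi^\ast$ obtained by relativizing the field-sort quantifiers of $\varphi$ to the predicate $\k$ and reading off $c$ via the given function symbol, so that $\mathcal K_i\models\varphi^\ast \iff (\k_i,\Gamma_i;c_i)\models\varphi$; since $\mathcal K_1\equiv\mathcal K_2$ agree on $\varphi^\ast$, the reducts agree on $\varphi$.

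For the backward direction ($\Leftarrow$): suppose $(\k_1,\Gamma_1;c_1)\equiv(\k_2,\Gamma_2;c_2)$. Apply Theorem~\ref{MainTheorem} to $\mathcal K_1$: its theory is axiomatized by $\Mo(\ell,s,c)$, the $\d$-henselianity axioms, and $\Th(\k_1,\Gamma_1;c_1)$. Now $\mathcal K_2$ satisfies $\Mo(\ell,s,c)$ and the $\d$-henselianity axioms by hypothesis, and it satisfies $\Th(\k_1,\Gamma_1;c_1)$ because that equals $\Th(\k_2,\Gamma_2;c_2)$ by the assumed elementary equivalence (again using the relativization $\varphi\mapsto\varphi^\ast$ to see $\mathcal K_2\models\varphi^\ast$ for every $\varphi\in\Th(\k_2,\Gamma_2;c_2)$). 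Hence $\mathcal K_2\models\Th(\mathcal K_1)$, and since $\Th(\mathcal K_1)$ is a complete theory (being the set of consequences of a complete axiomatization), this forces $\mathcal K_2\equiv\mathcal K_1$.

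I do not expect any real obstacle here: the only thing to be careful about is the bookkeeping that the axiom groups (1) and (2) are genuinely $\mathcal K$-independent (so that Theorem~\ref{MainTheorem} yields a \emph{uniform} axiomatization modulo the residue-field-and-value-group data), and that the translation $\varphi\mapsto\varphi^\ast$ between the one-variable-sort language of $(\k,\Gamma;c)$ and $L_2$ is a routine relativization. Both are immediate from the setup in this section, so the corollary is essentially a restatement of Theorem~\ref{MainTheorem}.
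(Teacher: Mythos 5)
Your proof is correct and follows exactly the route the paper intends: the corollary is stated there without proof as an immediate consequence of Theorem~\ref{MainTheorem}, with the forward direction being the standard relativization of $L_c$-sentences to the predicate $\k$ and the backward direction being completeness of the axiomatization $(1)$--$(3)$. Your careful bookkeeping that only axiom group $(3)$ depends on $\mathcal{K}$ is precisely the point, and there is nothing to add.
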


\noindent
In connection with eliminating the use of CH we introduce the $L_2$-theory
$T$ whose models are the $\d$-henselian models of $\Mo(\ell,s,c)$. The 
structures
$(\k, \Gamma; c)$ where $\k$ is a differential field, $\Gamma$ is an ordered abelian group, and $c: \Gamma \to \k$, are  $L_c$-structures for a certain sublanguage $L_c$ of $L_2$.
Now Corollary~\ref{cor:equiv} yields:

\begin{cor}\label{cor:syntequiv} Every $L_2$-sentence is $T$-equivalent to some $L_c$-sentence.
\end{cor}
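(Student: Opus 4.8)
The plan is to run the standard interpolation-style argument, treating Corollary~\ref{cor:equiv} as a relative-completeness statement: it says exactly that, for a model $\mathcal{K}\models T$, the $L_c$-reduct $(\k,\Gamma;c)$ determines $\Th(\mathcal{K})$. Here I identify every $L_c$-sentence with the $L_2$-sentence obtained by relativizing its $\k$-sort quantifiers to the predicate singling out $\k\subseteq K$, so that for $\mathcal{K}\models T$ and an $L_c$-sentence $\psi$ we have $\mathcal{K}\models\psi$ if and only if $(\k,\Gamma;c)\models\psi$; keeping this identification honest is the only bookkeeping point that requires care.

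Fix an $L_2$-sentence $\varphi$, and let $\Sigma$ be the set of all $L_c$-sentences $\psi$ with $T\cup\{\varphi\}\models\psi$. First I would prove that $T\cup\Sigma\models\varphi$. Arguing by contradiction, pick $\mathcal{K}_1=(K_1,\Gamma_1;v_1,s_1,c_1)\models T\cup\Sigma\cup\{\neg\varphi\}$, with $L_c$-reduct $(\k_1,\Gamma_1;c_1)$. The key sub-claim is that $T\cup\{\varphi\}\cup\Th_{L_c}(\k_1,\Gamma_1;c_1)$ is consistent: otherwise compactness yields a finite $\Delta\subseteq\Th_{L_c}(\k_1,\Gamma_1;c_1)$ with $T\cup\{\varphi\}\models\neg\bigwedge\Delta$, so $\neg\bigwedge\Delta\in\Sigma$, hence $\mathcal{K}_1\models\neg\bigwedge\Delta$, i.e. $(\k_1,\Gamma_1;c_1)\models\neg\bigwedge\Delta$, contradicting $\Delta\subseteq\Th_{L_c}(\k_1,\Gamma_1;c_1)$. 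So take $\mathcal{K}_2\models T\cup\{\varphi\}\cup\Th_{L_c}(\k_1,\Gamma_1;c_1)$; its $L_c$-reduct satisfies the complete $L_c$-theory of $(\k_1,\Gamma_1;c_1)$, so $(\k_2,\Gamma_2;c_2)\equiv(\k_1,\Gamma_1;c_1)$. By Corollary~\ref{cor:equiv} this gives $\mathcal{K}_1\equiv\mathcal{K}_2$, and since $\mathcal{K}_2\models\varphi$ we get $\mathcal{K}_1\models\varphi$ — contradicting the choice of $\mathcal{K}_1$.

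Now $T\cup\Sigma\models\varphi$ together with compactness produces a finite $\Sigma_0\subseteq\Sigma$ with $T\cup\Sigma_0\models\varphi$. Setting $\psi_0:=\bigwedge\Sigma_0$, an $L_c$-sentence, we have $T\models\psi_0\to\varphi$, while $T\models\varphi\to\psi_0$ because every conjunct of $\psi_0$ lies in $\Sigma$; hence $T\models\varphi\leftrightarrow\psi_0$, which is the assertion. I do not expect a genuine obstacle: the argument uses only Corollary~\ref{cor:equiv} and the compactness theorem, and in particular invokes neither CH nor the back-and-forth construction — which is precisely why it can be used afterward to eliminate CH from the proof of Theorem~\ref{MainTheorem}.
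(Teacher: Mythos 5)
Your compactness/interpolation argument is correct and is precisely the standard derivation the paper leaves implicit when it states that Corollary~\ref{cor:equiv} ``yields'' Corollary~\ref{cor:syntequiv}, including the necessary bookkeeping of relativizing $L_c$-sentences to the predicate for $\k$. One caveat on your closing remark: since Corollary~\ref{cor:equiv} is itself obtained from Theorem~\ref{MainTheorem}, whose proof uses CH, your argument still depends on CH indirectly; the paper eliminates CH not because this derivation is CH-free, but because the resulting statement is arithmetic and hence the CH-dependent proof converts to one in ZFC.
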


\noindent
The above proof of Corollary~\ref{cor:syntequiv} depends on CH, 
but $T$ has an explicit 
axiomatization and so the statement of this corollary is ``arithmetic''.
Therefore this proof can be converted to one using just ZFC (without CH).
Thus as an obvious  consequence of Corollary~\ref{cor:syntequiv},
 Theorem~\ref{MainTheorem} also holds without assuming CH.


\section{Existence of $\k$, $s$, $c$}

\noindent
In this section we construct under certain conditions a lift $\k$,
a cross-section $s$, and a map $c$ as in the previous section.

\begin{prop}\label{facts} Assume $\mathcal{K} = (K, \Gamma; v, s, c)\models \Mo(\ell,c,s)$.
Then
\begin{align*} s(\ker(c))\ =\ C^\times &\cap s(\Gamma)\quad \text{$($so $\ker(c)\ \subseteq\ v(C^\times))$}, \qquad 
c\big(v(C^\times)\big)\ \subseteq\ \k^\dagger,\\
c(\Gamma)\cap \k^{\dagger}\ &=\ \{0\}\ \Longleftrightarrow\ \ker(c)\ =\ 
v(C^{\times}).
\end{align*}
\end{prop}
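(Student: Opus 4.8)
The plan is to verify the three assertions in turn, using only the defining properties of a model $\mathcal{K}\models\Mo(\ell,c,s)$: that $s$ is a cross-section of $v$, that $\k\subseteq\mathcal{O}$ is a lift of the differential residue field (so in particular $\k\cap\smallo=\{0\}$, by injectivity of the residue map on $\k$), that $c(\gamma)=s(\gamma)^\dagger$ for all $\gamma\in\Gamma$, and that the derivation of $K$ is small. Throughout I will use the logarithmic-derivative identities $(xy)^\dagger=x^\dagger+y^\dagger$ and $(x/y)^\dagger=x^\dagger-y^\dagger$ for $x,y\in K^\times$. The first identity $s(\ker(c))=C^\times\cap s(\Gamma)$ is then essentially a restatement of $c(\gamma)=s(\gamma)^\dagger$: for $\gamma\in\Gamma$ one has $c(\gamma)=0$ iff $s(\gamma)'=0$ iff $s(\gamma)\in C^\times$, while $s(\gamma)\in s(\Gamma)$ always; applying $v$ to an element of $s(\ker(c))\subseteq C^\times$ yields the parenthetical $\ker(c)\subseteq v(C^\times)$.

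The heart of the matter---and the step I expect to be the main obstacle---is the inclusion $c\big(v(C^\times)\big)\subseteq\k^\dagger$. Given $\gamma\in v(C^\times)$, pick $a\in C^\times$ with $v(a)=\gamma$ and set $u:=a/s(\gamma)$, which lies in $\mathcal{O}^\times$ since $v(u)=0$. As $a^\dagger=0$ we get $u^\dagger=-s(\gamma)^\dagger=-c(\gamma)$. Using the lift, choose $w\in\k^\times$ with $u/w\in1+\smallo$, say $u=w(1+\epsilon)$ with $\epsilon\in\smallo$; then $u^\dagger=w^\dagger+\epsilon'/(1+\epsilon)$. Smallness of the derivation gives $\epsilon'\in\smallo$, and $1+\epsilon\in\mathcal{O}^\times$, so $\epsilon'/(1+\epsilon)\in\smallo$; hence $c(\gamma)+w^\dagger=-\epsilon'/(1+\epsilon)\in\smallo$. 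But $c(\gamma)\in\k$ by the definition of $\mathcal{K}$ and $w^\dagger\in\k$ since $\k$ is a differential field, so $c(\gamma)+w^\dagger\in\k\cap\smallo=\{0\}$, giving $c(\gamma)=-w^\dagger=(w^{-1})^\dagger\in\k^\dagger$. The only real care needed is in confirming that the correction term $\epsilon'/(1+\epsilon)$ genuinely lands in $\smallo$ and in invoking $\k\cap\smallo=\{0\}$; the rest is bookkeeping with the logarithmic derivative.

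Finally, the equivalence $c(\Gamma)\cap\k^\dagger=\{0\}\iff\ker(c)=v(C^\times)$ follows formally from the first two parts. For the implication $\Leftarrow$: if $c(\gamma)=w^\dagger$ with $w\in\k^\times$, then $(s(\gamma)/w)^\dagger=c(\gamma)-w^\dagger=0$ and $v(s(\gamma)/w)=\gamma$ (using $v(w)=0$), so $\gamma\in v(C^\times)=\ker(c)$, whence $c(\gamma)=0$; since also $0=c(0)\in c(\Gamma)$ and $0=1^\dagger\in\k^\dagger$, this gives $c(\Gamma)\cap\k^\dagger=\{0\}$. For the implication $\Rightarrow$: we already have $\ker(c)\subseteq v(C^\times)$, and conversely any $\gamma\in v(C^\times)$ satisfies $c(\gamma)\in c(\Gamma)\cap\k^\dagger=\{0\}$ by the inclusion just established, so $\gamma\in\ker(c)$; hence $\ker(c)=v(C^\times)$.
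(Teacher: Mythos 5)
Your proof is correct and follows essentially the same route as the paper's: the same direct verification of the first equality, the same reduction of $c(v(C^\times))\subseteq\k^\dagger$ to writing a unit as (lift element)$\cdot(1+\epsilon)$ and using $\k\cap\smallo=\{0\}$ together with smallness of the derivation, and the same formal derivation of the equivalence from the first two parts. The only cosmetic difference is that you work with $u=a/s(\gamma)$ where the paper writes $s(\gamma)=ua$.
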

\begin{proof} Let $\gamma\in \Gamma$. If $c(\gamma)=0$, then $s(\gamma)^\dagger=0$, so $s(\gamma)\in C^\times\cap s(\Gamma)$. If $s(\gamma)\in C^\times$,
then $c(\gamma)=s(\gamma)^\dagger=0$, so  $\gamma\in  \ker(c)$. This proves the 
first equality.
Next, for the inclusion $c\big(v(C^\times)\big)\subseteq \k^\dagger$, 
suppose $\gamma=va$ with $a\in C^{\times}$. 
Then $s(\gamma) = ua$ with $u\asymp 1$ in $K$, so $u =d(1+\epsilon)$
with $d\in \k^\times$ and $\epsilon\prec 1$. Hence 
$$c(\gamma)\ =\ s(\gamma)^\dagger\ =\ u^\dagger\ =\ d^\dagger + (1 + \epsilon)^\dagger\ =\ d^\dagger+\frac{\epsilon'}{1+\epsilon}.$$
Since $c(\gamma), d^\dagger\in \k$ and $\epsilon'\prec 1$, this gives
$\epsilon'=0$, so $c(\gamma)\in \k^\dagger$, as claimed.
As to the equivalence, suppose $c(\Gamma)\cap \k^{\dagger}= \{0\}$.
Then $c\big(v(C^\times)\big)=\{0\}$ by the inclusion that we just proved, so
$v(C^\times)\subseteq \ker(c)$. We already have the reverse inclusion, so
$\ker(c)= v(C^{\times})$. For the converse, assume $\ker(c)= v(C^{\times})$. Let
$\gamma\in \Gamma$ be such that $c(\gamma)=d^\dagger$ with $d\in \k^\times$. Then
$s(\gamma)^\dagger=d^\dagger$, so $s(\gamma)/d\in C^\times$, hence 
$\gamma=v\big(s(\gamma)/d\big)\in v(C^\times)$, and thus
$c(\gamma)=0$, as claimed.
\end{proof}

\noindent
Examples where $c(\Gamma)\cap \k^{\dagger}\ne \{0\}$: 
Take any differential field $\k$ with $\k\ne C_{\k}$, and take $\Gamma=\Z$. 
Then $\k^\dagger\ne \{0\}$; take any nonzero element $u\in \k^\dagger$.
Then for the additive map $c: \Gamma\to \k$ given by $c(1)=u$ we have
$c(\Gamma)=\Z u\subseteq \k^\dagger$, and so $\k((t^\Gamma))_c$ 
is a model of $\Mo(\ell,c,s)$ with  $c(\Gamma)\cap \k^{\dagger}\ne \{0\}$.
By taking $\k$ to be linearly surjective, this model is $\d$-henselian. 

\medskip\noindent
An example where $c(\Gamma)\cap \k^{\dagger}= \{0\}$: Take
$\k= \T_{\log}$, the differential field of logarithmic transseries; see
\cite[Chapter 15 and Appendix A]{ADAMTT} about $\T_{\log}$, especially the
fact that $\T_{\log}$ is linearly surjective. Also $\T_{\log}$ contains
$\R$ as a subfield, and $f^\dagger \notin \R$ for all nonzero $f\in \T_{\log}$.
Next, take $\Gamma=\R$ and define $c: \Gamma\to \k$ by $c(r)=r$. Then
$K:= \k((t^\Gamma))$ yields a $\d$-henselian model $K_c$ of $\Mo(\ell,c,s)$ 
with $c(\Gamma)\cap \k^{\dagger}= \{0\}$. 
Allen Gehret conjectured an axiomatization of $\Th(\T_{\log})$ that would 
imply its decidability, and thus the decidability of the theory of $K_c$.
This $K_c$ has few 
constants by the following obvious consequence of Proposition~\ref{facts}:

\begin{cor} \label{fewConstants}
Suppose $\mathcal{K} = (K, \Gamma; v, s, c)\models \Mo(\ell,c,s)$. Then:
$$c \text{ is injective and }c(\Gamma)\cap \k^{\dagger}= \{0\}\ \Longleftrightarrow\ \mathcal{K} \text{ has few constants}.$$
\end{cor}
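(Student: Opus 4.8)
The plan is to deduce Corollary~\ref{fewConstants} directly from Proposition~\ref{facts}. Recall that $\mathcal{K}$ has few constants means $v(C^\times)=\{0\}$. The first displayed line of Proposition~\ref{facts} gives $\ker(c)\subseteq v(C^\times)$, and the equivalence in its second line says $c(\Gamma)\cap\k^\dagger=\{0\}$ iff $\ker(c)=v(C^\times)$. So the whole statement hinges on translating ``$c$ injective and $c(\Gamma)\cap\k^\dagger=\{0\}$'' into ``$v(C^\times)=\{0\}$'' and back.

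For the forward direction, assume $c$ is injective and $c(\Gamma)\cap\k^\dagger=\{0\}$. Injectivity of $c$ means $\ker(c)=\{0\}$. By the equivalence in Proposition~\ref{facts}, $c(\Gamma)\cap\k^\dagger=\{0\}$ gives $\ker(c)=v(C^\times)$, hence $v(C^\times)=\{0\}$, i.e. $\mathcal{K}$ has few constants. For the converse, assume $v(C^\times)=\{0\}$. Since $0\in\k^\dagger$ always (as $1^\dagger=0$), the inclusion $\ker(c)\subseteq v(C^\times)=\{0\}$ forces $\ker(c)=\{0\}$, so $c$ is injective; and then $\ker(c)=\{0\}=v(C^\times)$, so by the equivalence in Proposition~\ref{facts} we get $c(\Gamma)\cap\k^\dagger=\{0\}$. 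This closes both directions.

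\begin{proof}
This is immediate from Proposition~\ref{facts}. Suppose first that $c$ is injective and $c(\Gamma)\cap\k^\dagger=\{0\}$. Injectivity gives $\ker(c)=\{0\}$, and the equivalence in Proposition~\ref{facts} gives $\ker(c)=v(C^\times)$; hence $v(C^\times)=\{0\}$, so $\mathcal{K}$ has few constants. Conversely, suppose $v(C^\times)=\{0\}$. By Proposition~\ref{facts} we have $\ker(c)\subseteq v(C^\times)=\{0\}$, so $c$ is injective, and then $\ker(c)=\{0\}=v(C^\times)$, so the equivalence in Proposition~\ref{facts} yields $c(\Gamma)\cap\k^\dagger=\{0\}$.
\end{proof}

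There is essentially no obstacle here: the corollary is a formal consequence of the two statements in Proposition~\ref{facts}, and the only point worth noting is that $0\in\k^\dagger$, so that ``$\ker(c)\subseteq v(C^\times)$'' together with ``$v(C^\times)=\{0\}$'' really does force $c$ to be injective rather than merely constraining $\ker(c)$ up to the trivial element. The real content has already been established in Proposition~\ref{facts}.
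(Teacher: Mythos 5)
Your proof is correct and is exactly the deduction the paper has in mind: the paper states Corollary~\ref{fewConstants} as an ``obvious consequence'' of Proposition~\ref{facts}, and your argument is precisely that formal consequence (the inclusion $\ker(c)\subseteq v(C^\times)$ plus the equivalence $c(\Gamma)\cap\k^\dagger=\{0\}\Leftrightarrow\ker(c)=v(C^\times)$). The closing aside about $0\in\k^\dagger$ is not actually needed --- injectivity in the converse follows directly from $\ker(c)\subseteq v(C^\times)=\{0\}$ --- but this is harmless.
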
 

\medskip\noindent
We now provide an example to show that in Theorem~\ref{F1} we cannot drop 
the map $c$ in the case of few constants. Take $\k = \T_{\log}$ and $\Gamma = \zz$. Define the additive maps $c_1 : \Gamma \to \k$ by $c_1(1) = 1$ and $c_2 :\Gamma \to \k$ by $c_2(1) = \sqrt{2}$; instead of $\sqrt{2}$, any irrational real number will do. Let $K_1 := \k((t^\Gamma))$ and $K_2 := \k((t^\Gamma))$ be the differential Hahn fields with derivations defined as in the introduction using the maps $c_1$ and $c_2$, respectively. They are $\d$-henselian monotone valued differential fields. As in the previous example they have few constants by Corollary~ \ref{fewConstants}. 
We claim that $K_1$ and $K_2$ are not elementarily equivalent as valued 
differential fields (without $c_1$ and $c_2$ as primitives), 
so the traditional Ax-Kochen-Ershov principle 
does not hold.
In $K_1$, we have $t^\dagger = c(1) = 1$ and so $K_1 \models \exists a \neq 0 (a^\dagger = 1)$.
We now show that $K_2 \not\models \exists a \neq 0 (a^\dagger = 1)$.
Towards a contradiction, assume $a \in K_2^\times$ is such that $a^\dagger = 1$.
Then $a = t^k d (1 + \epsilon)$ with $k \in\zz$, $d \in \k^\times$ and $\epsilon \in K_2$ with $\epsilon \prec 1$. Hence $a^\dagger = c_2(k) + d^\dagger + (1 + \epsilon)^\dagger$, so $$k\sqrt{2} + d^\dagger + \frac{\epsilon'}{1 + \epsilon}\ =\ 1.$$
Since $\epsilon' \prec 1$ we get $k\sqrt{2} + d^\dagger = 1$ and $\epsilon' = 0$. Thus $d^\dagger = 1 - k\sqrt{2} \in \mathbb{R}$. Since
$1 - k\sqrt{2} \neq 0$, this contradicts 
$\T_{\log}^\dagger \cap \mathbb{R} = \{ 0 \}$.

Next we give an example of a decidable $\d$-henselian monotone valued differential field with few constants. The valued differential field $\T$
of transseries is linearly surjective by \cite[Corollary 15.0.2]{ADAMTT} and \cite[Corollary 14.2.2]{ADAMTT}.
As $\T[\mathrm{i}]$ with $\mathrm{i}^2 = -1$ is algebraic over $\T$,  it is also linearly surjective by \cite[Corollary 5.4.3]{ADAMTT}. The proof of \cite[Proposition 10.7.10]{ADAMTT} gives $(\T[\mathrm{i}]^\times)^\dagger = \T + \mathrm{i}\der \smallo$, where $\smallo$ is the maximal ideal of the valuation ring of $\T$. Thus taking $\k = \T[\mathrm{i}]$, $\Gamma = \R$ and the additive map $c:\Gamma \to \k$ given by $c(r) = \mathrm{i}r$, we have $c(\Gamma) \cap \k^\dagger = \mathrm{i}\R \cap (\T + \mathrm{i}\der \smallo) = \{ 0 \}$ and therefore $K := \T[\mathrm{i}]((t^\R))_c$ will be a monotone $\d$-henselian valued differential field with few constants by Corollary \ref{fewConstants}. Moreover, $\text{Th}(K)$ is decidable by Theorem~\ref{F1}, since the 2-sorted structure $(\T[\mathrm{i}], \R; c)$ is interpretable in the valued differential field $\T$ and the latter has decidable theory by \cite[Corollary 16.6.3]{ADAMTT}.


\medskip\noindent
\textit{In what follows we fix a differential field $K$ with a valuation
$v: K^\times \to \Gamma=v(K^\times)$ such that $(K, \Gamma;v)$ is a monotone
valued differential field.}

\begin{lem} \label{GroupValuation}
Suppose $(K, \Gamma;v)$ is $\d$-henselian and $\k$ is a lift of its
differential residue field. Then 
$G := \{ a\in K^\times : a^\dagger \in \k \}$ is a subgroup of $K^\times$ 
with $v(G) = \Gamma$.
\end{lem}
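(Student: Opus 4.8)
\textbf{Proof plan for Lemma~\ref{GroupValuation}.}

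The plan is to verify first that $G$ is a subgroup of $K^\times$, which is a routine logarithmic-derivative computation: for $a,b\in G$ we have $(ab)^\dagger = a^\dagger + b^\dagger \in \k$ and $(a^{-1})^\dagger = -a^\dagger \in \k$, and $1\in G$ since $1^\dagger = 0\in \k$; so the only real content is the surjectivity statement $v(G) = \Gamma$. Fix $\gamma\in\Gamma$ and pick any $u\in K^\times$ with $vu = \gamma$; the task is to correct $u$ by a unit $w\asymp 1$ so that $(uw)^\dagger\in\k$, i.e. to solve $w^\dagger = b - u^\dagger$ for some $b\in\k$ with $w\asymp 1$. Since the derivation of $K$ is small (monotonicity), $u^\dagger = u'/u$ lies in $\mathcal{O}$; let $\bar b\in\k$ be its residue and lift it to $b\in\k\subseteq\mathcal{O}$ via the given lift. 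Then $b - u^\dagger\in\smallo$, so it suffices to show that every element of $\smallo$ is of the form $w^\dagger = w'/w$ for some $w\asymp 1$.

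This last step is where $\d$-henselianity enters, and I expect it to be the main (though modest) obstacle. Given $\delta\in\smallo$, I would seek $w = 1 + y$ with $y\in\smallo$ solving $(1+y)^\dagger = \delta$, i.e. $y' = \delta + \delta y$. Consider the differential polynomial $P(Y) := Y' - \delta Y - \delta \in \mathcal{O}\{Y\}$. Its reduction $\overline P \in \k\{Y\}$ is $\overline{P}(Y) = Y'$ (since $\delta$ reduces to $0$), which has total degree $1$; hence by $\d$-henselianity $P$ has a zero $y_0\in\mathcal{O}$. We need $y_0\in\smallo$: reducing the equation $y_0' = \delta(1 + y_0)$ and using that the induced derivation on $\k$ sends the residue $\bar y_0$ to the residue of $y_0'$, which is the residue of $\delta(1+y_0)\in\smallo$, hence $0$, we get $\overline{y_0}' = 0$ in $\k$; this alone does not force $\bar y_0 = 0$, so instead I would build in the constraint from the start. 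A cleaner route: apply $\d$-henselianity to $P(Y) = Y' - \delta Y - \delta$ after first noting that any zero $y_0\in\mathcal{O}$ with $\bar y_0 = \bar c$ for a constant $\bar c\in\k$ can be adjusted — rather, simplest is to replace $P$ by $P(Y+r)$ for a suitable $r\in\k$ lifting $\bar y_0$, reducing to the case $\bar y_0 = 0$; but $r$ would need $r' = 0$, i.e. $r\in C_\k$, and $\overline{y_0}' = 0$ gives exactly $\bar y_0\in C_\k$, so such an $r\in\k$ exists and $y_0 - r\in\smallo$ while still $P(Y+r)$ has the same shape $Y' - \delta Y - (\delta + \delta r - r') = Y' - \delta Y - \delta(1+r)$ with $\delta(1+r)\in\smallo$. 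Thus $w := 1 + (y_0 - r) \cdot$(appropriate unit) — more carefully, restart with the corrected target — gives $w\asymp 1$ and $w^\dagger = \delta' $-type relation; I will present this adjustment cleanly in the write-up, the point being that $\d$-henselianity plus the lift $\k$ supplies a solution $y\in\smallo$ of $y' = \delta(1+y)$.

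With that solved, $w = 1 + y\asymp 1$ satisfies $w^\dagger = w'/w = \delta(1+y)/(1+y) = \delta = b - u^\dagger$, so $(uw)^\dagger = u^\dagger + w^\dagger = b\in\k$, giving $uw\in G$ and $v(uw) = vu + vw = \gamma + 0 = \gamma$. Since $\gamma\in\Gamma$ was arbitrary, $v(G) = \Gamma$, completing the proof. The only subtlety worth flagging in the final text is the bookkeeping in the previous paragraph ensuring the $\d$-henselian solution lands in $\smallo$ rather than merely in $\mathcal{O}$; everything else is the additivity of $a\mapsto a^\dagger$ and smallness of the derivation.
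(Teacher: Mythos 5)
Your reduction of the lemma to the single fact ``every $\delta\in\smallo$ is of the form $w^\dagger$ with $w\asymp 1$'' is exactly the paper's route (the paper handles the case $f'\prec f$ separately via a constant, but your uniform version via the residue $b$ of $u^\dagger$ is fine, using monotonicity to get $u^\dagger\in\mathcal{O}$). The paper, however, simply cites this key fact as $\smallo=(1+\smallo)^\dagger$ \cite[Corollary 7.1.9]{ADAMTT}, whereas you try to derive it directly from the paper's definition of $\d$-henselianity --- and that derivation has a genuine gap. The axiom as stated only provides \emph{some} zero $y_0\in\mathcal{O}$ of $P(Y)=Y'-\delta Y-\delta$, and $P$ always has the degenerate zero $-1$ (indeed $P(-1)=\delta-\delta=0$), so nothing prevents the axiom from handing you $y_0$ with $\bar y_0=-1$. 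Your proposed repair does not close this: if $\bar y_0\neq -1$ then $1+y_0\asymp1$ already and no adjustment is needed, while if $\bar y_0=-1$ your recentering by the constant $r\in C_{\k}$ with $1+r=0$ yields $z=y_0-r\in\smallo$ satisfying the \emph{homogeneous} equation $z'=\delta z$, so $w:=1+y_0=z$ is either $0$ or an element with $w\prec1$ --- in neither case a unit with $w^\dagger=\delta$. Deciding whether a nondegenerate root exists is essentially the statement you are trying to prove, so the argument as written is circular at this point.

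The cleanest repairs are: (a) cite \cite[Corollary 7.1.9]{ADAMTT} as the paper does; or (b) rescale the variable before invoking $\d$-henselianity. For (b), assume $\delta\neq0$ and set $Q(Z):=P(\delta Z)/\delta=Z'+(\delta^\dagger-\delta)Z-1$; monotonicity gives $\delta^\dagger\in\mathcal{O}$, so $Q\in\mathcal{O}\{Z\}$ and $\overline{Q}=Z'+\overline{\delta^\dagger}\,Z-1$ has total degree $1$, whence $\d$-henselianity yields $z\in\mathcal{O}$ with $Q(z)=0$ and therefore $y:=\delta z\in\smallo$ with $P(y)=0$. Now $1+y\asymp1$ automatically and $(1+y)^\dagger=\delta$, which completes your argument. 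With either repair the rest of your write-up (the subgroup verification, the choice of $b\in\k$, and the conclusion $v(G)=\Gamma$) is correct and matches the paper.
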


\begin{proof} Using $(a/b)^\dagger=a^\dagger-b^\dagger$ for $a,b\in K^\times$ we
see that $G$ is a subgroup of $K^\times$. Let $\gamma \in \Gamma$;
our goal is to find a $g\in G$ with $vg=\gamma$.
Take $f\in K^\times$ with $vf=\gamma$. If $f'\prec f$, then \cite[7.1.10]{ADAMTT}
gives $g\in C^\times$ such that $f\asymp g$, so $g\in G$ and $vg=\gamma$.
Next, suppose $f'\asymp f$. Then $f^\dagger\asymp 1$, so 
$f^\dagger = a + \epsilon$ with $a\in \k$ and $\epsilon \in \smallo$.
By \cite[Corollary 7.1.9]{ADAMTT} we have $\smallo = (1 + \smallo)^\dagger$, so
$\epsilon = (1 + \delta)^\dagger$ with $\delta\in \smallo$. Then
$ (\frac{f}{1+\delta})^\dagger = a\in \k$, so 
$\frac{f}{1 + \delta}\in G$ and $v(\frac{f}{1 + \delta}) =  \gamma$. 
\end{proof}

\noindent
Recall that if $(K, \Gamma;v)$ is $\d$-henselian, then a 
lift of the differential residue field exists. Below we assume a lift 
$\k$ of the differential
residue field is given, and we consider the 
$2$-sorted structure $\big((K,\k), \Gamma;v\big)$ (so $\k$ is a distinguished
subset of $K$).

\begin{lem} \label{SaturatedCaseCrossSectionExistence}
Suppose $\big((K,\k), \Gamma; v\big)$ is $\d$-henselian, $\aleph_1$-saturated and $G$ is a definable subgroup of $K^\times$ such that $v(G) = \Gamma$. Then there exists a cross-section $s:\Gamma \to K^\times$ such that $s(\Gamma)\subseteq G$.
\end{lem}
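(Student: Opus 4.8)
The plan is to build $s$ by Zorn's lemma as a union of partial cross-sections, the trick being to work only with partial cross-sections defined on \emph{pure} subgroups of $\Gamma$; this keeps each extension step countable, which is exactly what $\aleph_1$-saturation can handle. Concretely, I would let $\mathcal{P}$ be the set of pairs $(\Delta,s_\Delta)$ where $\Delta$ is a pure subgroup of $\Gamma$ (meaning $nx\in\Delta$ with $n\ge 1$ and $x\in\Gamma$ forces $x\in\Delta$) and $s_\Delta\colon\Delta\to G$ is a group homomorphism with $v\circ s_\Delta=\mathrm{id}_\Delta$, ordered by extension. A union along a chain in $\mathcal{P}$ again lies in $\mathcal{P}$, and $(\{0\},0)\in\mathcal{P}$, so Zorn gives a maximal $(\Delta,s_\Delta)$. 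It then suffices to prove $\Delta=\Gamma$, since $s:=s_\Delta$ is then a cross-section with $s(\Gamma)\subseteq G$.

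So suppose for contradiction $\gamma\in\Gamma\setminus\Delta$, and set $\Delta^+:=\{x\in\Gamma:\ nx\in\Delta+\zz\gamma\text{ for some }n\ge1\}$, the divisible closure of $\Delta+\zz\gamma$ in $\Gamma$. This is again a pure subgroup, it properly contains $\Delta$, and since $\Delta$ is pure the quotient $\Delta^+/\Delta$ is torsion-free of rank $1$, hence isomorphic to a subgroup $D$ of $\qq$ containing $\zz$ via an isomorphism sending $1$ to the class of $\gamma$; in particular $D$ is countable and locally cyclic. I would then write $D=\bigcup_k\frac1{d_k}\zz$ with $1=d_1\mid d_2\mid\cdots$, choose $x_k\in\Delta^+$ whose class corresponds to $\frac1{d_k}$, and put $\rho_k:=\frac{d_k}{d_{k-1}}x_k-x_{k-1}$ for $k\ge 2$; purity of $\Delta$ forces $\rho_k\in\Delta$, since $d_{k-1}\rho_k=d_kx_k-d_{k-1}x_{k-1}\in\Delta$.

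To extend $s_\Delta$ to $\Delta^+$ I would use $\aleph_1$-saturation to realize the type, over the countable parameter set $\{x_k\}_k\cup\{s_\Delta(\rho_k)\}_k$, in variables $(w_k)_{k\ge1}$ asserting $w_k\in G$, $v(w_k)=x_k$, and $w_k^{\,d_k/d_{k-1}}=w_{k-1}\,s_\Delta(\rho_k)$ for $k\ge 2$. Since $G$ is definable and the type has only countably many formulas and parameters, it is enough to verify finite satisfiability in $K$: given conditions involving $w_1,\dots,w_m$ only, pick $w_m\in G$ with $v(w_m)=x_m$ (possible as $v(G)=\Gamma$) and define $w_{k-1}:=w_k^{\,d_k/d_{k-1}}s_\Delta(\rho_k)^{-1}$ downward, noting $w_{k-1}\in G$ and $v(w_{k-1})=\frac{d_k}{d_{k-1}}x_k-\rho_k=x_{k-1}$. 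Letting $(\tilde w_k)_k$ realize the full type, I would define $s^+\colon\Delta^+\to G$ by $s^+(x):=\tilde w_k^{\,c}\,s_\Delta(\delta)$ whenever $x=c\,x_k+\delta$ with $c\in\zz$ and $\delta\in\Delta$ (such a $k$ exists). Iterating the defining relations gives $\tilde w_{k'}^{\,d_{k'}/d_k}=\tilde w_k\,s_\Delta(\sigma_{k,k'})$ with $\sigma_{k,k'}=\frac{d_{k'}}{d_k}x_{k'}-x_k\in\Delta$; combined with torsion-freeness of $\Delta^+/\Delta$ this shows that $s^+$ is well defined, is a homomorphism extending $s_\Delta$, satisfies $v\circ s^+=\mathrm{id}$, and has image in $G$. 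Since $\Delta^+$ is pure and properly contains $\Delta$, this contradicts maximality, so $\Delta=\Gamma$.

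I expect the main obstacle to be exactly the bookkeeping in the last step: choosing the $x_k$ and $\rho_k$ coherently, and then verifying that $s^+$ is a well-defined homomorphism after the single appeal to saturation; everything else is formal. (Incidentally, the argument uses only that $G$ is a definable subgroup with $v(G)=\Gamma$ and that the structure is $\aleph_1$-saturated; $\d$-henselianity enters this circle of ideas mainly through Lemma~\ref{GroupValuation}, which supplies such a $G$.)
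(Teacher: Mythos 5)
Your proof is correct, but it takes a more self-contained route than the paper. The paper's proof is three lines: it sets $H := \mathcal{O}^\times \cap G$, notes that $H$ is pure in $G$ (since $G/H \cong \Gamma$ is torsion-free) and that $H$, being a definable subgroup of an $\aleph_1$-saturated structure, is $\aleph_1$-saturated as an abelian group, hence pure-injective; the exact sequence $1 \to H \to G \to \Gamma \to 0$ therefore splits by \cite[Corollary 3.3.37]{ADAMTT}, and a splitting is precisely a cross-section with image in $G$. What you have written is, in effect, an inline proof of that splitting criterion: your Zorn argument over partial sections defined on pure subgroups of $\Gamma$, with the rank-one extension step handled by realizing a countable type of compatible $d_k$-th roots, is the standard argument that $\aleph_1$-saturated abelian groups are algebraically compact, transplanted into the two-sorted valued-field setting. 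The details check out --- purity of $\Delta$ is exactly what makes $\rho_k\in\Delta$ and keeps $\Delta^+/\Delta$ torsion-free of rank one, the telescoping identity for $\sigma_{k,k'}$ gives well-definedness, and the downward back-substitution in the finite-satisfiability check is where $v(G)=\Gamma$ enters. The one point you lean on implicitly is that $\aleph_1$-saturation realizes finitely satisfiable types in countably many variables over countable parameter sets; this is standard (realize the variables one at a time against a complete extension of the type) but deserves a sentence. Your closing remark is also accurate: as in the paper, $\d$-henselianity is not used in this lemma itself, only to supply $G$ via Lemma~\ref{GroupValuation}. The trade-off is simply a citation versus a page of bookkeeping; if the splitting criterion is available, the paper's version is the economical one.
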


\begin{proof}
First note that $H := \mathcal{O}^\times \cap G$ is a pure subgroup of $G$.
The inclusion $H\to G$ and the restriction of the valuation 
$v$ to $G$ yield an exact sequence
$$1 \to H \to G \to \Gamma \to 0$$
of abelian groups. Since $H$ is $\aleph_1$-saturated as an abelian group,
this exact sequence splits; see \cite[Corollary 3.3.37]{ADAMTT}. This yields a cross-section $s:\Gamma \to K^\times$ with $s(\Gamma)\subseteq G$.
\end{proof}

\noindent
Combining the previous two lemmas gives us the main result of this section:

\begin{thm}\label{ConstructionOfMaps}
Suppose $\big((K,\k), \Gamma; v\big)$ is $\d$-henselian and 
$\aleph_1$-saturated. Then there is a cross-section $s:\Gamma \to K^\times$  and 
an additive map $c: \Gamma \to \k$ with $s(\gamma)^\dagger = c(\gamma)$ for all $\gamma \in \Gamma$.
\end{thm}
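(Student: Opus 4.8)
The statement combines Lemmas~\ref{GroupValuation} and \ref{SaturatedCaseCrossSectionExistence} in the obvious way, so the plan is essentially to chain them together and then read off the additive map $c$. First I would invoke Lemma~\ref{GroupValuation}: since $\big((K,\k),\Gamma;v\big)$ is $\d$-henselian with lift $\k$, the set $G := \{a\in K^\times : a^\dagger\in\k\}$ is a subgroup of $K^\times$ with $v(G)=\Gamma$. I need $G$ to be definable in the $2$-sorted structure $\big((K,\k),\Gamma;v\big)$ so that the hypothesis of Lemma~\ref{SaturatedCaseCrossSectionExistence} applies; but $G$ is defined by the condition $a\neq 0 \wedge a'/a \in \k$, which is a first-order formula using the derivation, the field operations, and the distinguished predicate for $\k$, so $G$ is indeed definable.

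Next I would apply Lemma~\ref{SaturatedCaseCrossSectionExistence} with this $G$: since $\big((K,\k),\Gamma;v\big)$ is $\d$-henselian and $\aleph_1$-saturated and $G$ is a definable subgroup with $v(G)=\Gamma$, there is a cross-section $s:\Gamma\to K^\times$ with $s(\Gamma)\subseteq G$. Now for each $\gamma\in\Gamma$ we have $s(\gamma)\in G$, so by the definition of $G$ we may set $c(\gamma) := s(\gamma)^\dagger\in\k$. It remains only to check that $c$ is additive: for $\gamma,\delta\in\Gamma$, using that $s$ is a group morphism and the identity $(ab)^\dagger = a^\dagger + b^\dagger$ for $a,b\in K^\times$, we get $c(\gamma+\delta) = s(\gamma+\delta)^\dagger = \big(s(\gamma)s(\delta)\big)^\dagger = s(\gamma)^\dagger + s(\delta)^\dagger = c(\gamma)+c(\delta)$. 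This gives the required $s$ and $c$.

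There is really no serious obstacle here; the theorem is a bookkeeping corollary of the two preceding lemmas, and the only point demanding a word of care is the definability of $G$ in the appropriate language — which is immediate once one notes that $\k$ is present as a distinguished predicate. All the genuine content (the surjectivity of $v$ on $G$, and the splitting of the exact sequence via saturation of the unit-part subgroup) has already been carried out in Lemmas~\ref{GroupValuation} and \ref{SaturatedCaseCrossSectionExistence}, so the proof of Theorem~\ref{ConstructionOfMaps} is just a few lines assembling these ingredients and verifying additivity of $c$.
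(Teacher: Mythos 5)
Your proposal is correct and follows exactly the paper's own argument: definability of $G$ from Lemma~\ref{GroupValuation} via the predicate for $\k$, then Lemma~\ref{SaturatedCaseCrossSectionExistence} to get $s$ with $s(\Gamma)\subseteq G$, then $c(\gamma):=s(\gamma)^\dagger$. The paper leaves the additivity of $c$ implicit (it follows since $s$ is a group morphism and $\dagger$ turns products into sums), which you spell out; otherwise the two proofs coincide.
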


\begin{proof} Since $\k$ is now part of the structure, the subgroup $G$ of $K^\times$ from Lemma~\ref{GroupValuation} is definable. Now apply Lemma \ref{SaturatedCaseCrossSectionExistence} and get a cross-section $s:\Gamma \to K^\times$ such that $s(\Gamma)^\dagger \subseteq \k$. Take the additive map $c:\Gamma \to \k$ to be given by $c(\gamma) = s(\gamma)^\dagger$.
\end{proof}

\begin{proof}[Proof of Theorem \ref{F2}] Let a monotone $\d$-henselian 
valued field be given. Then it has a lift of its differential residue field, 
and fixing such a lift $\k$, it is a structure 
$\big((K,\k), \Gamma; v\big)$ as above. Passing to an elementary extension, 
we can assume $\big((K,\k), \Gamma; v\big)$ is $\aleph_1$-saturated.
Then Theorem \ref{ConstructionOfMaps} yields a cross-section $s: \Gamma\to K^\times$ and 
an additive map $c: \Gamma \to \k$ with $s(\gamma)^\dagger = c(\gamma)$ for all $\gamma \in \Gamma$.
This in turn yields a Hahn field
$\k((t^\Gamma))_c$ that is elementarily equivalent to 
$\big((K,\k), \Gamma; v,s,c\big)$. 
\end{proof}


\noindent
We can now prove Theorem \ref{F4}:

\begin{proof}[Proof of Theorem \ref{F4}]
Let $F$ be a monotone $\d$-henselian valued field such that $v_F(C_F^\times)$ is pure in $\Gamma_F=v_F(F^\times)$. 
The valued differential field $F$ has a lift of its differential residue field, 
and fixing such a lift $\k_F$ we get the structure $\big((F,\k_F), \Gamma_F; v_F\big)$.
Take an elementary extension $\big((K,\k), \Gamma; v\big)$ of it that is $\aleph_1$-saturated. Then $\Delta := v(C_K^\times)$ is pure in $v(K^\times)$.
Since $\Delta$ is also $\aleph_1$-saturated (as an abelian group), 
we have a direct sum decomposition $\Gamma = \Delta \oplus \Gamma^*$ by \cite[Corollary 3.3.37]{ADAMTT}.
Since the valued subfield $C := C_K$ of $K$ is $\aleph_1$-saturated, it has
a cross-section $s_C : \Delta \to C^\times$.
Theorem~\ref{ConstructionOfMaps} yields a cross-section $\tilde{s}: \Gamma\to K^\times$ of
the valued field $K$ such that $\tilde{s}(\Gamma)^\dagger \subseteq \k$.
By the definition of $\Delta$ we have $\tilde{s}(\gamma)\notin C$ for all
$\gamma\in \Gamma\setminus \Delta$. 

Let $s$ be the cross-section of
the valued field $K$ that agrees with $s_C$ on $\Delta$ and with $\tilde{s}$ on $\Gamma^*$. Then $s(\gamma)^\dagger \in \k$ for all $\gamma\in \Gamma$, so we have an additive map $c:\Gamma\to \k$ given by 
$c(\gamma)=s(\gamma)^\dagger$. Moreover, for $\gamma\in \Gamma$,
$$ c(\gamma) = 0\ \Leftrightarrow\ s(\gamma)'=0\ \Leftrightarrow\ 
s(\gamma)\ \in C\ \Leftrightarrow\ \gamma \in \Delta.$$
This gives $\ker(c) = v(C^\times)$, and thus 
$c(\Gamma) \cap \k^\dagger = \{ 0 \}$ by Proposition~\ref{facts}. Since $\ker(c)$ is a pure subgroup of $\Gamma$ then so is $\Delta$.
This in turn yields a Hahn field $\k((t^\Gamma))_c$ with the required properties that is elementarily equivalent to $\big((K,\k), \Gamma; v,s,c\big)$. 
\end{proof}

\section{Eliminating the cross-section}

\medskip\noindent
Note that every $\mathcal{K}\models \Mo(\ell,s,c)$ satisfies the sentences
\begin{enumerate}
\item $\forall \gamma \forall \delta \quad c(\gamma + \delta) = c(\gamma) + c(\delta)$,
\item $\forall \gamma \exists x\ne 0 \quad v(x) = \gamma\ \&\ x^\dagger = c(\gamma)$.
\end{enumerate}
These sentences don't mention the cross-section $s$. Below we 
derive the analogue of Theorem~\ref{MainTheorem} in the setting without a cross-section. Let $L_2^{-}$ be the language $L_2$ with the symbol $s$ for the cross-section removed. Let $\Mo(\ell, c)$ be the $L_2^{-}$-theory whose models are the
$L_2^{-}$-structures
$$\mathcal{K}\ =\ (K, \Gamma; v, c),$$
where $K$ is a differential field equipped with a differential
subfield $\k$ (singled out by a unary 
predicate symbol), $\Gamma$ is an ordered abelian group,
$v: K^\times \to \Gamma=v(K^\times)$ is a valuation that makes $K$ into
a monotone valued differential field such that $\k \subseteq K$ is a lift
of the differential residue field, and $c: \Gamma \to \k$
is such that the sentences (1) and (2) above are satisfied. 

\begin{lem} \label{ExistenceOfCrossSectionWithMapCOnly}
Suppose $\mathcal{K}=(K, \Gamma; v, c)\models \Mo(\ell,c)$ is $\d$-henselian and $\aleph_1$-saturated. Then there is a cross-section $s:\Gamma \to K^\times$ such that $s(\gamma)^\dagger = c(\gamma)$ for all $\gamma\in \Gamma$.
\end{lem}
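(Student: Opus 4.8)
The plan is to reduce this to Theorem~\ref{ConstructionOfMaps} by producing, inside the given structure, a \emph{definable} subgroup $G\subseteq K^\times$ with $v(G)=\Gamma$ on which $a\mapsto a^\dagger$ takes values ``compatible with $c$'', and then invoke the splitting argument of Lemma~\ref{SaturatedCaseCrossSectionExistence}. First I would set $G:=\{a\in K^\times: a^\dagger\in\k\}$ as in Lemma~\ref{GroupValuation}; since $\k$ is part of the structure this $G$ is definable, it is a subgroup of $K^\times$, and $v(G)=\Gamma$ by that lemma. So far this is literally Theorem~\ref{ConstructionOfMaps}, which already hands us \emph{some} cross-section $s$ with $s(\gamma)^\dagger\in\k$ for all $\gamma$. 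The point of the present lemma is the sharper conclusion $s(\gamma)^\dagger=c(\gamma)$, and here is where sentence~(2) in the definition of $\Mo(\ell,c)$ enters: for each $\gamma$ the set $G_\gamma:=\{a\in G: va=\gamma,\ a^\dagger=c(\gamma)\}$ is nonempty.

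Next I would check that the $G_\gamma$ fit together into a coset structure. If $a\in G_\gamma$ and $b\in G_\delta$ then $(ab)^\dagger=a^\dagger+b^\dagger=c(\gamma)+c(\delta)=c(\gamma+\delta)$ by sentence~(1), so $ab\in G_{\gamma+\delta}$; in particular $G_0$ is a subgroup of $\mathcal O^\times$ (note $a\in G_0$ forces $va=0$ and $a^\dagger=c(0)=0$, i.e.\ $a\in C^\times\cap\mathcal O^\times$, so $G_0=C^\times\cap\mathcal O^\times$), and $\bigcup_\gamma G_\gamma$ is a subgroup $G'$ of $K^\times$ with $v(G')=\Gamma$ and $(G')^\dagger\subseteq c(\Gamma)\subseteq\k$. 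Moreover $G'$ is definable (it is cut out inside $G$ by the condition $a^\dagger=c(va)$, using the function symbols $v$ and $c$). Now the restriction of $v$ to $G'$ gives a short exact sequence
\[
1\ \to\ G_0\ \to\ G'\ \to\ \Gamma\ \to\ 0
\]
of abelian groups, with $G_0=C^\times\cap\mathcal O^\times$. Since $G_0$ is a definable subgroup of the $\aleph_1$-saturated structure, it is $\aleph_1$-saturated as an abelian group, so this sequence splits by \cite[Corollary 3.3.37]{ADAMTT}, yielding a group morphism $s:\Gamma\to G'$ with $v\circ s=\mathrm{id}_\Gamma$. For each $\gamma$ we then have $s(\gamma)\in G_\gamma$, hence $s(\gamma)^\dagger=c(\gamma)$, as required.

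The only genuine subtlety — and the step I would be most careful about — is the claim that $G'$ is a subgroup, equivalently that $G_\gamma\ne\emptyset$ for every $\gamma$ combines correctly with closure under multiplication and inverses; closure under multiplication is the computation above, and closure under inverses is $(a^{-1})^\dagger=-a^\dagger=-c(\gamma)=c(-\gamma)$, so $a^{-1}\in G_{-\gamma}$. One should also note that $G'\subseteq G$, so everything about $G$ from Lemma~\ref{GroupValuation} still applies; in fact the new content over Theorem~\ref{ConstructionOfMaps} is precisely that passing from $G$ to its definable subgroup $G'$ costs nothing for surjectivity of $v$ (guaranteed by sentence~(2)) while forcing $s(\gamma)^\dagger$ to equal the prescribed value $c(\gamma)$ rather than merely lying in $\k$. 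No appeal to the purity of $v(C^\times)$ is needed here, since $G_0$ need not be all of $C^\times$ and $\Gamma$ is split off directly rather than through $v(C^\times)$.
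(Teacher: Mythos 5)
Your argument is correct and is essentially the paper's own proof: the group you call $G'$ is exactly the definable subgroup $\{x\in K^\times: x^\dagger=c(v(x))\}$ that the paper forms directly from sentences (1) and (2) of $\Mo(\ell,c)$, and your splitting of $1\to G_0\to G'\to\Gamma\to 0$ is just the content of Lemma~\ref{SaturatedCaseCrossSectionExistence} (which you could cite outright, noting as there that $G_0=\mathcal{O}^\times\cap G'$ is pure in $G'$). The detour through the larger group $G=\{a:a^\dagger\in\k\}$ of Lemma~\ref{GroupValuation} is harmless but unnecessary.
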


\begin{proof}
By (1) and (2) we have a definable subgroup  
$G := \{ x \in K^\times : x^\dagger = c( v(x) )\}$ of $K^\times$
with $v(G) = \Gamma$. 
Now, use Lemma \ref{SaturatedCaseCrossSectionExistence} to get a cross section $s:\Gamma \to K^\times$ with $s(\Gamma)\subseteq G$. This $s$ has the desired property.
\end{proof}

\begin{thm}\label{MT-s} 
Suppose $\mathcal{K}=(K,\Gamma;v,c)\models \Mo(\ell,c)$ is $\d$-henselian. Then
$\Th(\mathcal{K})$ is axiomatized by the following axiom schemes:
\begin{enumerate}[font=\normalfont]
\item $\Mo(\ell, c)$;
\item the axioms for $\d$-henselianity;
\item $\operatorname{Th}(\k,\Gamma;c)$ with $\k$ as differential field and $\Gamma$ as ordered abelian group.
\end{enumerate}
\end{thm}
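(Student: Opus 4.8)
Write $T^{-}$ for the $L_2^{-}$-theory given by the schemes $(1)$--$(3)$. Since $\mathcal{K}\models T^{-}$ the theory is consistent, and the plan is to show it is complete, i.e.\ that any two of its models are elementarily equivalent. The strategy is to reduce to Theorem~\ref{MainTheorem} (equivalently Corollary~\ref{cor:equiv}) by reinstating a cross-section inside $\aleph_1$-saturated elementary extensions, using Lemma~\ref{ExistenceOfCrossSectionWithMapCOnly}.

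First I would record that each ingredient of $\Mo(\ell,c)$ is first-order in $L_2^{-}$ --- in particular the statement that $\k$ is a lift of the differential residue field unwinds to: $\k$ is a differential subfield of $K$, $\k\subseteq\mathcal{O}$, $\k\cap\smallo=\{0\}$, and every element of $\mathcal{O}$ lies in $\k+\smallo$ --- and that $\d$-henselianity and $\Th(\k,\Gamma;c)$ are likewise given by $L_2^{-}$-sentence schemes. Hence $T^{-}$ is preserved under elementary extensions in $L_2^{-}$, and for any $\mathcal{K}'=(K',\Gamma';v',c')\models T^{-}$ with distinguished lift $\k'$ one has $(\k',\Gamma';c')\equiv(\k,\Gamma;c)$ by scheme $(3)$.

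Now take $\mathcal{K}_1,\mathcal{K}_2\models T^{-}$. Replacing each by an $\aleph_1$-saturated elementary extension (which changes neither its $L_2^{-}$-theory nor the validity of $(1)$--$(3)$), I may assume $\mathcal{K}_i=(K_i,\Gamma_i;v_i,c_i)$ is $\aleph_1$-saturated. By Lemma~\ref{ExistenceOfCrossSectionWithMapCOnly} there is a cross-section $s_i:\Gamma_i\to K_i^\times$ with $s_i(\gamma)^\dagger=c_i(\gamma)$ for all $\gamma$; expanding, $\mathcal{K}_i^{+}:=(K_i,\Gamma_i;v_i,s_i,c_i)$ is a $\d$-henselian model of $\Mo(\ell,s,c)$. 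Since $(\k_1,\Gamma_1;c_1)\equiv(\k,\Gamma;c)\equiv(\k_2,\Gamma_2;c_2)$ by the previous paragraph, Corollary~\ref{cor:equiv} gives $\mathcal{K}_1^{+}\equiv\mathcal{K}_2^{+}$ as $L_2$-structures; taking $L_2^{-}$-reducts yields $\mathcal{K}_1\equiv\mathcal{K}_2$. As explained in Section~2, Corollary~\ref{cor:equiv} is a ZFC consequence of Theorem~\ref{MainTheorem}, so no appeal to CH is needed here.

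I do not expect a deep obstacle: the content is entirely borrowed from Lemma~\ref{ExistenceOfCrossSectionWithMapCOnly} and Theorem~\ref{MainTheorem}. The points that will require care are the bookkeeping around the change of language --- verifying that one may pass to a saturated $L_2^{-}$-extension, re-expand by the cross-section produced by Lemma~\ref{ExistenceOfCrossSectionWithMapCOnly}, apply Corollary~\ref{cor:equiv} in $L_2$, and then reduct back --- together with the elementary but necessary observation that every clause in the definitions of $\Mo(\ell,c)$, of $\d$-henselianity, and of ``lift'' is first-order and therefore survives the passage to an elementary extension.
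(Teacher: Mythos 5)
Your proposal is correct and follows essentially the same route as the paper: pass to $\aleph_1$-saturated models, use Lemma~\ref{ExistenceOfCrossSectionWithMapCOnly} to reinstate a cross-section making them models of $\Mo(\ell,s,c)$, and then invoke Theorem~\ref{MainTheorem} (via Corollary~\ref{cor:equiv}) before taking $L_2^{-}$-reducts. The extra bookkeeping you supply about first-orderness of the axioms and the language change is left implicit in the paper but is accurate.
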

\begin{proof}
Let any two $\aleph_1$-saturated models of the axioms in the theorem be given. 
By Lemma~\ref{ExistenceOfCrossSectionWithMapCOnly} we have in both models a cross-section that make these into models of $\Mo(\ell, s,c)$. It remains to appeal to Theorem \ref{MainTheorem}
to conclude that these two models are elementarily equivalent.
\end{proof}

\noindent
Before giving the proof of Corollary \ref{F3} from the introduction we 
note that any algebraic valued 
differential field 
extension of a monotone valued differential field is again monotone; see \cite[Corollary 6.3.10]{ADAMTT}. 

\begin{proof}[Proof of Corollary \ref{F3}] Let $K$ range over $\d$-henselian
monotone valued differential fields.
As in \cite[Proof of Corollary 8.0.2]{ADAMTT} we have a set $\Sigma_n$ of sentences in the language of valued differential fields, independent of $K$, such that $K \models \Sigma_n$ if and only if every valued differential field extension $L$ of $K$ with $[L:K] = n$ is $\d$-henselian.
Now by Theorem \ref{F2} we have $K\equiv \k((t^\Gamma))_c$ for a suitable
differential field $\k$, ordered abelian group $\Gamma$, and additive map
$c: \Gamma \to \k$. Every valued differential field extension $L$ of $\k((t^\Gamma))_c$ of finite degree is spherically complete as a valued field and so $\d$-henselian by \cite[Corollary 5.4.3 and Theorem 7.2.6]{ADAMTT}. Hence
$\k((t^\Gamma))_c\models \Sigma_n$ and thus $K \models \Sigma_n$, for all $n\ge 1$.
\end{proof}

\noindent
We now give an example of a monotone $\d$-henselian field $F$  such that $v(C_F^\times)$ is not pure in $v(F^\times)$. This elaborates on an example by the
referee
of a monotone henselian valued differential field $F$ for which 
$v(C_F^\times)$ is not pure in $v(F^\times)$.

Let the additive map $c:\zz \to \T_{\log}$ be given by $c(1) = 1$. With the usual
derivation on $\T_{\log}$, this yields the (discretely) 
valued differential field $\k = \T_{\log}((s^\zz))_c$, with $s'=~{}s$. Since $\T_{\log}$ is
linearly surjective, $\k$ is $\d$-henselian field and thus linearly surjective.
We now forget about the valuation of $\k$, consider it just as a differential
field, and introduce $K := \k((t^\zz))_d$ with the additive map $d:\zz \to \k$ given by $d(1) = 0$, so $t'=0$. 
Then $K$ is a monotone $\d$-henselian field with $v(K^\times) = \zz$.
Finally, let $F := K(\sqrt{st})$, which is naturally a valued differential 
field extension of $K$.
Since $F$ is algebraic over $K$, it is monotone and $\d$-henselian too, by Corollary~\ref{F3}.  Clearly, $v(F^\times) = \frac{1}{2}\zz$.
We claim that $v(C^\times_F) = \zz$ and so it is not pure in $v(F^\times)$.
From $t^{\zz}\subseteq C_F$ we get $\zz \subseteq v(C_F^\times)$. For the reverse inclusion, let any element $a + b\sqrt{st}\in C_F^\times$ be given
with $a, b \in K$, not both zero. 
Now, $$(a+b\sqrt{st})' = a' + b'\sqrt{st} + b (\sqrt{st})' = a' + b' \sqrt{st} + b(\sqrt{st}/2) = a' + (b' + b/2)\sqrt{st},$$
so $a' = 0$ and $b' + b / 2 = 0$.
From $b' = -b / 2$ we now derive $b = 0$. (Then $a + b\sqrt{st} = a\in C_{\k}((t^\zz))$, and thus $v(a + b\sqrt{st})\in \zz$, as claimed.)
Let $k,l$ range over $\zz$. Towards a contradiction, suppose $\displaystyle b = \sum_{l \geq l_0} b_l t^l$ with all $b_l \in \k$, $l_0 \in \zz$, $b_{l_0}\ne 0$.
Then $\displaystyle b' = \sum_{l \geq l_0} b'_l t^l$ and so the equality $b'=-b/2$ takes the form
$$\sum_{l \geq l_0} b'_l t^l\ =\  -\frac{1}{2} \sum_{l \geq l_0} b_l t^l\ =\ \sum_{l \geq l_0} -\frac{1}{2} b_l t^l.$$
Therefore $b'_l = - b_l/2$ for all $l\geq l_0$, in particular for $l = l_0$. 
Assume $\displaystyle b_{l_0} = \sum_{k\geq k_0} u_k s^k$, with all $u_k \in \T_{\log}$, and $k_0\in \zz$, $u_{k_0} \neq 0$.
We have 
$\displaystyle b'_{l_0} = \sum_{k\geq k_0} (u'_k + k u_k) s^k$
and
$\displaystyle -\frac{1}{2} b_{l_0} = \sum_{k\geq k_0} -\frac{1}{2}u_k s^k$.
Thus $u'_k + ku_k = -u_k/2$ for all $k \geq k_0$. For $k = k_0$ we have $u_{k_0} \neq 0$, and so this gives $u^\dagger_{k_0} = -k_0 - 1/2$. 
However, this contradicts $\T_{\log}^\dagger \cap \mathbb{R} = \{ 0 \}$ and hence the claim is proved.

On the other hand:

\begin{prop} Let $F$ be a henselian valued differential
field with algebraically closed or real closed
residue field.  Then $v(C_F^\times)$ is pure in $v(F^\times)$. 
\end{prop}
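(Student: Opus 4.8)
The plan is to check the defining property of purity directly. Since $\Gamma := v(F^\times)$ is torsion-free, $\Delta := v(C_F^\times)$ is pure in $\Gamma$ if and only if for every $\gamma \in \Gamma$ and every $n \geq 1$, $n\gamma \in \Delta$ implies $\gamma \in \Delta$. So fix such $\gamma$ and $n$; choose $a \in C_F^\times$ with $v(a) = n\gamma$ and $f \in F^\times$ with $v(f) = \gamma$. Then $u := a f^{-n}$ lies in $\mathcal{O}_F^\times$, and I denote its residue by $\bar u \in \mathbf{k}^\times$.

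First I would produce a sign $\lambda \in \{1,-1\}$ and an element $w \in \mathcal{O}_F^\times$ with $(fw)^n = \lambda a$. For this it suffices that $\lambda \bar u$ be an $n$-th power in $\mathbf{k}$: if $\mathbf{k}$ is algebraically closed, or if $\mathbf{k}$ is real closed and $n$ is odd, take $\lambda = 1$; if $\mathbf{k}$ is real closed and $n$ is even, take $\lambda$ with $\lambda \bar u > 0$, which is then an $n$-th power because positive elements of a real closed field have $n$-th roots. Writing $\lambda \bar u = \bar w_0^{\,n}$ with $\bar w_0 \in \mathbf{k}^\times$, this is a simple root of $Y^n - \lambda \bar u$ (since $\operatorname{char}\mathbf{k}=0$ and $\bar w_0 \neq 0$), so by Hensel's Lemma it lifts to $w \in \mathcal{O}_F^\times$ with $w^n = \lambda u$, whence $(fw)^n = f^n \lambda u = \lambda a$.

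Second, I would invoke the elementary fact that an $n$-th root of a nonzero constant is a constant: if $h \in F^\times$ and $h^n \in C_F$, then $0 = (h^n)' = n h^{n-1} h'$, so $h' = 0$ because $\operatorname{char} F = 0$ and $h \neq 0$. Since $C_F$ is a field, $\lambda a \in C_F^\times$, so applying this to $h := fw$ gives $fw \in C_F^\times$; and $v(fw) = v(f) = \gamma$ because $w$ is a unit. Hence $\gamma \in v(C_F^\times) = \Delta$, as required. I do not expect a genuine obstacle: the only points needing care are the parity case split for real closed $\mathbf{k}$ and the (routine) simplicity of the root of $Y^n - \lambda\bar u$ needed to apply Hensel's Lemma; in particular the standing small-derivation hypothesis is not used.
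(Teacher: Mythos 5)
Your proof is correct and follows essentially the same route as the paper's: reduce $v(b/a^n)$ (your $u$) to a unit, adjust by a sign in the real closed case so the residue is an $n$-th power, extract the root by Hensel's Lemma, and conclude via the fact that an $n$-th root of a nonzero constant is a constant. The paper leaves the sign adjustment and the last differentiation step implicit, whereas you spell them out; there is no substantive difference.
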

\begin{proof} Let $n\alpha = \beta$ with 
$\alpha \in v(F^\times), \beta \in v(C_F^\times), n\ge 1$; 
our job is to show that then $\alpha \in v(C_F^\times)$.
Take $a \in F^\times$ with $v(a) = \alpha$ and $b \in C_F^\times$ with $v(b) = \beta$, so $v(b/a^n)=0$; if the residue field is real closed we also arrange that the residue class of $b/a^n$ is positive.
Considering the polynomial $\displaystyle P(Y) = Y^n - (b/a^n)\in \mathcal{O}_F[Y]$, the henselianity of $F$ and the assumption on the residue field gives
a zero $y \asymp 1$ in $F$ of $P$. Then $(ay)^n = b \in C_F^\times$, hence $ay \in C_F^\times$ with $v(ay) = \alpha$.
\end{proof}

\medskip\noindent
A valued differential field with small derivation is said to be 
{\em $\d$-algebraically maximal\/} if it has no proper immediate 
$\d$-algebraic valued differential field extension. 
For monotone valued differential fields with
linearly surjective differential residue field, 
$$\text{$\d$-algebraically maximal}\ \Longrightarrow\ \text{$\d$-henselian}$$
by \cite[Theorem 7.0.1]{ADAMTT}.  By \cite[Theorem 7.0.3]{ADAMTT}, 
the converse holds
in the case of few constants, but an example at the end of Section 7.4 of \cite{ADAMTT} shows that this converse fails for some $\d$-henselian monotone valued differential field with many constants. Below we generalize this example as follows:

\begin{cor}
Let $K$ be a $\d$-henselian, monotone, valued differential field with $v(C^\times)\ne \{0\}$. Then some $L\equiv K$ is not $\d$-algebraically maximal.
\end{cor}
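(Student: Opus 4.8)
The plan is to use Theorem~\ref{F2} to replace $K$ by an elementarily equivalent Hahn field $\k((t^\Gamma))_c$, and then build inside that Hahn model a proper immediate $\d$-algebraic extension, witnessed by a single equation of the form $y^\dagger = a$ (or $y' = a$) that has no solution in the model but is solvable after an immediate extension. First I would apply Theorem~\ref{F2}: since $K$ is $\d$-henselian and monotone, $K \equiv \k((t^\Gamma))_c$ for a suitable differential field $\k$ (linearly surjective), ordered abelian group $\Gamma$, and additive $c\colon \Gamma\to\k$. The hypothesis $v(C_K^\times)\ne\{0\}$ translates, via Proposition~\ref{facts} (the first equality $s(\ker(c)) = C^\times\cap s(\Gamma)$ shows $\ker(c)\subseteq v(C^\times)$, and conversely every constant of $K_c$ has the form $u t^\gamma$ with $u\asymp 1$, forcing $\gamma\in\ker(c)$ by the computation in the proof of Proposition~\ref{facts}), to the statement that $\ker(c)\ne\{0\}$. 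So fix $\gamma_0\in\ker(c)$ with $\gamma_0\ne 0$; then $t^{\gamma_0}\in C^\times_{K_c}$, and $v(C^\times_{K_c})\ni\gamma_0\ne 0$.

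Next I would exhibit the element to adjoin. The natural candidate is a ``logarithm'' of $t^{\gamma_0}$: seek $y$ with $y' = t^{\gamma_0}$, equivalently (since $t^{\gamma_0}\asymp t^{\gamma_0}$ and we are in the monotone setting) an element realizing a pseudo-Cauchy sequence with no pseudolimit in $K_c$. Concretely, the equation $Y' = t^{\gamma_0}$ has no solution in $\k((t^\Gamma))_c$: writing a putative solution as $\sum_\gamma b_\gamma t^\gamma$, applying $\der_c$ gives $\sum_\gamma(\der b_\gamma + c(\gamma)b_\gamma)t^\gamma = t^{\gamma_0}$, so we would need $\der b_{\gamma_0} + c(\gamma_0)b_{\gamma_0} = \der b_{\gamma_0} = 1$ and $\der b_\gamma + c(\gamma)b_\gamma = 0$ for $\gamma\ne\gamma_0$ — the first of these is solvable in $\k$ precisely when $1\in\der(\k)$, which need not hold, but even if it does the real obstruction is different, so instead I would pass to the standard device: adjoin a genuinely new constant. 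Adjoin a new symbol $\log t^{\gamma_0}$, i.e.\ form the $\d$-field extension $\k((t^\Gamma))_c(z)$ where $z$ is transcendental with $z' = t^{\gamma_0}$ (this is consistent since $t^{\gamma_0}\ne 0$), equip it with the Gaussian valuation extending $v$ with $v(z) = 0$ and residue class of $z$ a new transcendental over $\k$ — but that is not immediate. The correct approach is rather: work with $K_c$ itself already $\aleph_1$-saturated (pass to such an elementary extension, still $\equiv K$), and directly construct a proper \emph{immediate} $\d$-algebraic extension. By the theory in \cite[Chapter 7]{ADAMTT}, a monotone valued differential field with linearly surjective differential residue field is $\d$-algebraically maximal iff it is ``$\d$-henselian and [something extra]''; the end-of-Section-7.4 example of \cite{ADAMTT} constructs, over a $\d$-henselian monotone field with a nonzero constant value $\gamma_0$, a pseudo-Cauchy sequence of $\d$-algebraic type without pseudolimit, by exploiting exactly the element $t^{\gamma_0}$. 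So the plan is: reproduce that construction with the Hahn model $\k((t^\Gamma))_c$ in place of the specific field there, the point being that all that construction used was the existence of a nonzero element in $v(C^\times)$.

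In more detail, I would take the sequence $(y_\rho)$ in $\k((t^\Gamma))_c$ approximating a formal solution of $Y^\dagger = $ (something built from $t^{\gamma_0}$) whose ``defect'' shrinks through the value group generated by multiples of $\gamma_0$; the hypothesis $\gamma_0\ne 0$ guarantees the sequence is strictly pseudo-Cauchy and $\d$-algebraic, and $\d$-henselianity of $\k((t^\Gamma))_c$ guarantees it has \emph{no} pseudolimit in $\k((t^\Gamma))_c$ (a pseudolimit would solve the corresponding equation, contradicting a valuation-theoretic count using that $\ker(c)$ and $c$ interact with $\k^\dagger$ in a controlled way). Then an immediate $\d$-algebraic extension realizing the pseudolimit exists by the general existence theorem, so $\k((t^\Gamma))_c$ — hence $K$, up to $\equiv$ — is not $\d$-algebraically maximal; taking $L := \k((t^\Gamma))_c \equiv K$ finishes the proof. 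The main obstacle is the last computational point: verifying that the approximating sequence genuinely has no pseudolimit in the $\d$-henselian Hahn model, i.e.\ that $\d$-henselianity does not accidentally already solve the relevant equation. This is where the presence of a \emph{nonzero} element of $v(C^\times)$ is essential and where one must mimic carefully the cardinality/valuation bookkeeping of the \cite{ADAMTT} example, checking it goes through verbatim once ``$\gamma_0\in v(C^\times)$, $\gamma_0\ne 0$'' is available.
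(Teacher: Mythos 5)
Your plan has a fatal flaw at its core: you aim to exhibit a pseudo-Cauchy sequence of $\d$-algebraic type \emph{in the Hahn field} $\k((t^\Gamma))_c$ with no pseudolimit there, and then realize it in a proper immediate $\d$-algebraic extension. But $\k((t^\Gamma))_c$ is spherically complete, hence maximal as a valued field: it admits \emph{no} proper immediate valued field extensions at all, and every pc-sequence in it already has a pseudolimit in it. So the Hahn model itself is $\d$-algebraically maximal, and no amount of care with the ``cardinality/valuation bookkeeping'' will produce the divergent pc-sequence you want. The witness $L$ in the corollary must be a \emph{different} model of $\Th(K)$, and your proposal never produces one (passing up to an $\aleph_1$-saturated elementary extension, which you mention in passing, is also not carried out and points in the wrong direction).

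The paper's proof goes \emph{down} rather than sideways: by Theorems~\ref{F1} and~\ref{F2} and L\"owenheim--Skolem one arranges $K=\k((t^\Gamma))_c$ with $\k$ and $\Gamma$ countable. Taking $a\in C^\times$ with $va=\gamma_0>0$ and writing $a=\sum_{\gamma\ge\gamma_0}a_\gamma t^\gamma$, the leading monomial $\mathfrak{m}:=a_{\gamma_0}t^{\gamma_0}$ is itself a constant, so all sums $\sum_n q_n\mathfrak{m}^n$ with $q_n\in\qq$ are constants and $C$ is uncountable. Since $\k(t^\Gamma)$ is countable, a countable $L\prec K$ containing $\k(t^\Gamma)$ exists; $K$ is then an immediate extension of $L$, and choosing a constant $a\in C\setminus L$ gives the proper immediate $\d$-algebraic extension $L\<a\>=L(a)$ of $L$, so $L$ is not $\d$-algebraically maximal. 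A secondary error in your write-up: the hypothesis $v(C^\times)\ne\{0\}$ does \emph{not} translate to $\ker(c)\ne\{0\}$. A constant of valuation $\gamma_0$ only forces $c(\gamma_0)\in\k^\dagger$ (if $c(\gamma_0)=d^\dagger$ with $d\in\k^\times$, then $d^{-1}t^{\gamma_0}$ is a constant); Proposition~\ref{facts} gives $\ker(c)=v(C^\times)$ only under the extra hypothesis $c(\Gamma)\cap\k^\dagger=\{0\}$, which is not assumed here.
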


\begin{proof}
By Theorems ~\ref{F1} and ~\ref{F2} and L\"owenheim-Skolem we can arrange $K= \k((t^\Gamma))_c$ where the differential field $\k$ and the ordered abelian group $\Gamma$ are countable and $c: \Gamma \to \k$ is additive.
With $C:= C_K$, take $a\in C^\times$ with $va=\gamma_0>0$.
Then $a = \sum_{\gamma\ge \gamma_0} a_\gamma t^\gamma$, with 
 $\der(a_\gamma) + c(\gamma) a_\gamma = 0$ for all $\gamma$, in particular
for $\gamma=\gamma_0$. Hence $\mathfrak{m} := a_{\gamma_0} t^{\gamma_0}\in C$, and so all infinite sums $\sum_n q_n \mathfrak{m}^n$ with rational $q_n$ lie in $C$ as well. Thus $C$ is uncountable. 

On the other hand, $\k(t^\Gamma)$ is countable and so by L\"owenheim-Skolem
we have a countable $L\prec K$ that contains $\k(t^\Gamma)$. 
Thus $K$ is an immediate extension of $L$ and we can take $a\in C\setminus L$.
Then $L\< a \> = L(a)$ is a proper immediate $\d$-algebraic extension of $L$ and therefore $L$ is not $\d$-algebraically maximal.
\end{proof}

\section{Eliminating the lift of the differential residue field}

\medskip\noindent
In this section we drop the requirement of having a {\em lift} of the differential residue field in our structure and instead use a copy of the differential residue field.
For this purpose we consider 3-sorted structures
$$\mathcal{K} = (K, \mathbf{k}, \Gamma; \pi, v, c)$$
where $K$ and $\mathbf{k}$ are differential fields, $\Gamma$ is an ordered abelian group,
$v:K^\times \to \Gamma$ is a valuation which makes $K$ into a monotone valued differential field,
$\pi:\mathcal{O} \to \mathbf{k}$ with $\mathcal{O} := \mathcal{O}_v$ is a surjective differential ring morphism,
$c:\Gamma \to \mathbf{k}$ is an additive map satisfying
$\forall \gamma \exists x\ne 0  \quad \Big[v(x) = \gamma \ \&\  \pi(x^\dagger) = c(\gamma)\Big]$.
We construe these $\mathcal{K}$ as $L_3$-structures for a natural $3$-sorted language $L_3$ (with unary function symbols for $\pi, v$ and $c$). We have an obvious set $\textnormal{Mo}(c)$ of $L_3$-sentences whose models are exactly these $\mathcal{K}$.

\begin{lem}\label{ExistenceOfLiftWithMapCOnly}
Suppose $\mathcal{K} = (K, \mathbf{k}, \Gamma; \pi, v, c) \models \textnormal{Mo}(c)$ is $\d$-henselian and $\k$ is any lift of the differential residue field. Then $(K, \Gamma; v, \iota \circ c) \models \textnormal{Mo}(l, c)$, where  $\iota : \mathbf{k} \to \k$ is the inverse of the differential field isomorphism $\left.\pi\right|_\k :\k \to \mathbf{k}$.
\end{lem}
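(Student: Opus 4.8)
The plan is to verify directly that $(K, \Gamma; v, \iota \circ c)$ satisfies all the defining conditions in $\Mo(\ell, c)$, using the hypotheses on $\mathcal{K}$ and the basic properties of a lift. The ingredients of $\Mo(\ell, c)$ are: $K$ is a monotone valued differential field with value group $\Gamma = v(K^\times)$; a distinguished differential subfield that is a lift of the differential residue field; and an additive map from $\Gamma$ into that subfield satisfying sentences (1) and (2) from Section~4. The first part is immediate: $(K, \Gamma; v)$ is already a monotone valued differential field by the hypothesis that $\mathcal{K} \models \Mo(c)$, and $\k$ is assumed to be a lift of the differential residue field, so conditions (i) about the lift are inherited. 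The map $\iota \circ c : \Gamma \to \k$ is additive because $c$ is additive by hypothesis and $\iota$ is a field isomorphism (in particular additive), so sentence (1) holds.

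The substantive point is sentence (2): for every $\gamma \in \Gamma$ there is $x \in K^\times$ with $v(x) = \gamma$ and $x^\dagger = (\iota \circ c)(\gamma)$. I would start from the hypothesis on $c$ in $\Mo(c)$, which gives for each $\gamma$ some $x \in K^\times$ with $v(x) = \gamma$ and $\pi(x^\dagger) = c(\gamma)$. The element $\iota(c(\gamma))$ lies in $\k \subseteq \mathcal{O}$ and satisfies $\pi(\iota(c(\gamma))) = c(\gamma)$ since $\iota$ is the inverse of $\pi|_\k$. Hence $\pi(x^\dagger) = \pi(\iota(c(\gamma)))$, i.e.\ $x^\dagger - \iota(c(\gamma)) \in \smallo$. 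The task is then to correct $x$ to an element $\tilde x$ with the same valuation whose logarithmic derivative is exactly $\iota(c(\gamma))$ rather than merely congruent to it mod $\smallo$. This is exactly the kind of correction carried out in the proof of Lemma~\ref{GroupValuation}: by \cite[Corollary 7.1.9]{ADAMTT} we have $\smallo = (1+\smallo)^\dagger$, so writing $x^\dagger - \iota(c(\gamma)) = (1+\delta)^\dagger$ with $\delta \in \smallo$, the element $\tilde x := x/(1+\delta)$ satisfies $v(\tilde x) = v(x) = \gamma$ (since $1 + \delta \asymp 1$) and $\tilde x^\dagger = x^\dagger - (1+\delta)^\dagger = \iota(c(\gamma)) = (\iota \circ c)(\gamma)$, as required.

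I expect the main (mild) obstacle to be purely bookkeeping: making sure that the differential ring morphism $\pi$ and the lift $\k$ interact as claimed, namely that $\pi|_\k$ really is a differential field isomorphism onto $\mathbf{k}$ so that $\iota$ is well-defined and differential, and that $\pi(x^\dagger)$ makes sense, i.e.\ $x^\dagger \in \mathcal{O}$. The latter holds because $K$ is monotone, hence has small derivation, so $x^\dagger = x'/x$ — while not obviously in $\mathcal{O}$ for arbitrary $x$ — does lie in $\mathcal{O}$ after the correction once we know $\tilde x^\dagger = \iota(c(\gamma)) \in \k \subseteq \mathcal{O}$; and for the uncorrected $x$ coming from the hypothesis we may as well have chosen it (by the same $(1+\smallo)^\dagger$ trick applied inside $\mathcal{O}^\times$, or simply by the hypothesis being about $\pi(x^\dagger)$, which presupposes $x^\dagger \in \mathcal{O}$) so that $x^\dagger \in \mathcal{O}$. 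No deep input is needed beyond Lemma~\ref{GroupValuation}'s technique and \cite[Corollary 7.1.9]{ADAMTT}; everything else is checking that the conditions transport along the isomorphism $\iota$.
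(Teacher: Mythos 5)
Your proposal is correct and follows essentially the same route as the paper's proof: additivity of $\iota\circ c$ is immediate, and sentence (2) is obtained by noting $x^\dagger-\iota(c(\gamma))\in\smallo=(1+\smallo)^\dagger$ via \cite[Corollary 7.1.9]{ADAMTT} and replacing $x$ by $x/(1+\delta)$. Your bookkeeping worry about $x^\dagger\in\mathcal{O}$ is harmless: monotonicity gives $v(x^\dagger)\ge 0$ for all $x\in K^\times$ outright.
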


\begin{proof}
We need to check the two conditions from the previous section.
First of all $\iota \circ c$ is obviously additive.
Fix $\gamma \in \Gamma$.
There is an element $x \in K^\times$ with $v(x) = \gamma$ and $\pi(x^\dagger) = c(\gamma)$.
Let $a = (\iota \circ \pi) (x^\dagger) = (\iota \circ c) (\gamma)$. As $a \in \k$ and $\pi(a) = \pi(x^\dagger)$, we get $x^\dagger = a  + \epsilon$ for some $\epsilon \prec 1$. By \cite[Corollary 7.1.9]{ADAMTT} we have $ \epsilon = (1 + \delta) ^ \dagger$ for some $\delta \prec 1$ and thus 
$$(\iota \circ c) (\gamma) = a = x^\dagger - (1 + \delta)^\dagger = \Big( \frac{x}{1 + \delta} \Big)^\dagger, \textnormal{ and } v \Big( \frac{x}{1 + \delta} \Big) = v(x) = \gamma. $$
This completes the proof of the lemma.
\end{proof}

\begin{thm}\label{MT-l} 
Suppose $\mathcal{K}=(K, \mathbf{k}, \Gamma; \pi, v, c)\models \Mo(c)$ is $\d$-henselian. Then
$\Th(\mathcal{K})$ is axiomatized by the following axiom schemes:
\begin{enumerate}[font=\normalfont]
\item $\Mo(c)$;
\item the axioms for $\d$-henselianity;
\item $\operatorname{Th}(\mathbf{k},\Gamma;c)$ with $\mathbf{k}$ as differential field and $\Gamma$ as ordered abelian group.
\end{enumerate}
\end{thm}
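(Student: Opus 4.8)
The strategy mirrors the proof of Theorem~\ref{MT-s}: reduce the lift-free 3-sorted setting to the already-established 2-sorted setting with a lift (Theorem~\ref{MT-l} $\rightarrow$ Theorem~\ref{MT-s} $\rightarrow$ Theorem~\ref{MainTheorem}) by working in $\aleph_1$-saturated models, where we can manufacture the extra primitives. Let $T_{\mathcal{K}}^{(3)}$ denote the $L_3$-theory given by (1)--(3). Given any two models of $T_{\mathcal{K}}^{(3)}$, I want to show they are elementarily equivalent, so by Löwenheim--Skolem I may pass to $\aleph_1$-saturated models $\mathcal{K}_1 = (K_1, \mathbf{k}_1, \Gamma_1; \pi_1, v_1, c_1)$ and $\mathcal{K}_2 = (K_2, \mathbf{k}_2, \Gamma_2; \pi_2, v_2, c_2)$ of $T_{\mathcal{K}}^{(3)}$.

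First I would invoke the existence of a lift of the differential residue field: since each $\mathcal{K}_i$ is $\d$-henselian, $K_i$ has a differential subfield $\k_i \subseteq \mathcal{O}_{v_i}$ mapping isomorphically onto $\mathbf{k}_i$ under $\pi_i$. Fix such lifts $\k_1, \k_2$. By Lemma~\ref{ExistenceOfLiftWithMapCOnly}, the reduct $(K_i, \Gamma_i; v_i, \iota_i \circ c_i)$, where $\iota_i : \mathbf{k}_i \to \k_i$ is the inverse of $\left.\pi_i\right|_{\k_i}$, is a model of $\Mo(\ell, c)$; it is moreover $\d$-henselian (this is a property of $(K_i, \Gamma_i; v_i)$ alone, unchanged by adding primitives). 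Now I would apply Theorem~\ref{MT-s}: the theory of $(K_i, \Gamma_i; v_i, \iota_i \circ c_i)$ is axiomatized by $\Mo(\ell, c)$, the $\d$-henselianity axioms, and $\Th(\k_i, \Gamma_i; \iota_i \circ c_i)$. The point is that $(\k_i, \Gamma_i; \iota_i \circ c_i) \cong (\mathbf{k}_i, \Gamma_i; c_i)$ via $\pi_i$ (with the identity on $\Gamma_i$), and since $\mathcal{K}_1 \equiv \mathcal{K}_2$ forces $(\mathbf{k}_1, \Gamma_1; c_1) \equiv (\mathbf{k}_2, \Gamma_2; c_2)$ — here I use that $\mathbf{k}_i$ together with $c_i$ and $\Gamma_i$ are an $L_3$-definable sub-2-sorted structure — we get $(\k_1, \Gamma_1; \iota_1 \circ c_1) \equiv (\k_2, \Gamma_2; \iota_2 \circ c_2)$. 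By Theorem~\ref{MT-s} the two structures $(K_1, \Gamma_1; v_1, \iota_1 \circ c_1)$ and $(K_2, \Gamma_2; v_2, \iota_2 \circ c_2)$ are then elementarily equivalent, and a fortiori so are their $L_3$-reducts $\mathcal{K}_1$ and $\mathcal{K}_2$ (the predicate $\mathbf{k}_i$ and the maps $\pi_i$, $c_i$ are recovered up to the identification $\iota_i$, which is the standard quotient map; formally one checks every $L_3$-formula is, over $T_{\mathcal{K}}^{(3)}$ together with a fixed lift, equivalent to an $L_2^-$-formula). Thus $T_{\mathcal{K}}^{(3)}$ is complete relative to $\Th(\mathbf{k}, \Gamma; c)$, which is exactly the assertion.

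\textbf{Main obstacle.} The delicate point is the bookkeeping in the last step: the reduction via Lemma~\ref{ExistenceOfLiftWithMapCOnly} depends on a \emph{choice} of lift $\k_i$, whereas the conclusion $\mathcal{K}_1 \equiv \mathcal{K}_2$ must not. What makes this go through is that the residue map $\pi$ and the residue sort $\mathbf{k}$ are \emph{canonical} (unlike a lift), so that any $L_3$-sentence, when interpreted in a model of $\Mo(c)$ expanded by \emph{any} lift to a model of $\Mo(\ell, c)$, becomes an $L_2^-$-sentence whose truth value is independent of the lift — because different lifts are isomorphic over $\mathcal{O}$ via the identity modulo $\smallo$, and the valued-differential-field structure does not see inside the maximal ideal beyond what $\pi$ records. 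Making this ``translation is lift-independent'' step precise is the only real content; once it is in place, the chain $\mathcal{K}_1 \equiv \mathcal{K}_2 \iff (\mathbf{k}_1,\Gamma_1;c_1)\equiv(\mathbf{k}_2,\Gamma_2;c_2)$ follows formally from Theorem~\ref{MT-s}, exactly as Theorem~\ref{MT-s} itself followed from Theorem~\ref{MainTheorem}. Alternatively, and perhaps more cleanly, one can phrase everything as a single interpretation: $\mathcal{K}$ together with a lift interprets the $\Mo(\ell,c)$-structure and conversely, and quote Theorem~\ref{MT-s} through this interpretation, avoiding a separate back-and-forth.
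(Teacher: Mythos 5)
Your proposal is correct and follows essentially the same route as the paper: pass to $\aleph_1$-saturated models, use Lemma~\ref{ExistenceOfLiftWithMapCOnly} to equip each with a lift making it a model of $\Mo(\ell,c)$, and then reduce to Theorem~\ref{MT-s}, with your ``lift-independence of the translation'' discussion just making explicit what the paper leaves implicit. One small fix: the equivalence $(\mathbf{k}_1,\Gamma_1;c_1)\equiv(\mathbf{k}_2,\Gamma_2;c_2)$ should be justified by axiom scheme (3) (both structures model the complete theory $\Th(\mathbf{k},\Gamma;c)$), not by ``since $\mathcal{K}_1\equiv\mathcal{K}_2$'', which is the statement you are in the middle of proving.
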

\begin{proof}

Let any two $\aleph_1$-saturated models $\mathcal{K}_1=(K_1, \mathbf{k}_1, \Gamma_1; \pi_1, v_1, c_1)$ and $\mathcal{K}_2 = (K_2, \mathbf{k}_2, \Gamma_2; \pi_2, v_2, c_2)$ of the axioms in the theorem be given.
By Lemma~\ref{ExistenceOfLiftWithMapCOnly} we have in both models lifts of the differential residue fields that make these into models of $\Mo(\ell, c)$. So $\Th(\mathbf{k}_i, \Gamma_i; c_i) = \Th(\k_i, \Gamma_i; \iota_i \circ c_i)$ where $\iota_i$ is the isomorphism between the differential residue field $\mathbf{k}_i$ and its lift $\k_i$ for $i=1,2$.
It remains to appeal to Theorem \ref{MT-s}
to conclude that these two models are elementarily equivalent.
\end{proof}

\section*{Acknowledgments}
\noindent
The author thanks Lou van den Dries for numerous discussions and comments on this paper. The author also thanks the referee for pointing out an error in the previous version of the paper and for making other helpful comments.

\end{document}